\DeclareMathOperator{\sign}{sign}
\DeclareMathOperator{\nullspace}{null}
\newtheorem{condition}{Condition}
\newtheorem{lemma}{Lemma}
\newtheorem{corollary}{Corollary}
\newtheorem{theorem}{Theorem}
\newtheorem{definition}{Definition}
\newtheorem{remark}{Remark}
\newcommand \bzero{\mathbf{0}}
\newcommand \bone{\mathbf{1}}
\newcommand \ba{\mathbf{a}}
\newcommand \bg{\mathbf{g}} % exception since \bf is taken
\newcommand \bn{\mathbf{n}}
\newcommand \bq{\mathbf{q}}
\newcommand \bx{\mathbf{x}}
\newcommand \by{\mathbf{y}}
\newcommand \bA{\mathbf{A}}
\newcommand \bJ{\mathbf{J}}
\newcommand \bxi{\boldsymbol{\xi}}
\newcommand \bpi{\boldsymbol{\pi}}
\newcommand \bphi{\boldsymbol{\phi}}
\newcommand \bpsi{\boldsymbol{\psi}}
\newcommand \htphi{\hat{\phi}}
\newcommand \htpsi{\hat{\psi}}
\newcommand \mcC{\mathcal{C}}
\newcommand \mcG{\mathcal{G}}
\newcommand \mcN{\mathcal{N}}
\newcommand \mcP{\mathcal{P}}
\newcommand \mcS{\mathcal{S}}
\newcommand \mcT{\mathcal{T}}
\newcommand \bmcP{\bar{\mathcal{P}}}
\newcommand \tbq{\tilde{\mathbf{q}}}
\newcommand \tbphi{\tilde{\boldsymbol{\phi}}}
\newcommand \hbphi{\hat{\boldsymbol{\phi}}}
\newcommand \hbpsi{\hat{\boldsymbol{\psi}}}
\begin{document}
\title{Natural Gas Flow Solvers using Convex Relaxation}

\author{Manish K. Singh~\IEEEmembership{Student Member,~IEEE} and Vassilis Kekatos,~\IEEEmembership{Senior Member,~IEEE}

\thanks{Manuscript received on June 3, 2019; revised on November 20, 2019; accepted on January 21, 2020. Date of publication DATE; date of current version DATE. Paper no. TCONES-19-0184.}

\thanks{The authors are with the Bradley Dept. of ECE, Virginia Tech, Blacksburg, VA 24061, USA. Emails: \{manishks,kekatos\}@vt.edu. This research was supported by the U.S. National Science Foundation under Grant 1711587.}

\thanks{Color versions of one or more of the figures is this paper are available online at {http://ieeexplore.ieee.org}.}
\thanks{Digital Object Identifier XXXXXX}
}

\markboth{IEEE TRANSACTIONS ON CONTROL OF NETWORK SYSTEMS (to appear)}{Singh and Kekatos: Natural Gas Flow Solvers using Convex Relaxation}

\maketitle

\begin{abstract}
The vast infrastructure development, gas flow dynamics, and complex interdependence of gas with electric power networks call for advanced computational tools. Solving the equations relating gas injections to pressures and pipeline flows lies at the heart of natural gas network (NGN) operation, yet existing solvers require careful initialization and uniqueness has been an open question. In this context, this work considers the nonlinear steady-state version of the gas flow (GF) problem. It first establishes that the solution to the GF problem is unique under arbitrary NGN topologies, compressor types, and sets of specifications. For GF setups where pressure is specified on a single (reference) node and compressors do no appear in cycles, the GF task is posed as an convex minimization. To handle more general setups, a GF solver relying on a mixed-integer quadratically-constrained quadratic program (MI-QCQP) is also devised. This solver can be used for any GF setup at any NGN. It introduces binary variables to capture flow directions; relaxes the pressure drop equations to quadratic inequality constraints; and uses a carefully selected objective to promote the exactness of this relaxation. The relaxation is provably exact in NGNs with non-overlapping cycles and a single fixed-pressure node. The solver handles efficiently the involved bilinear terms through McCormick linearization. Numerical tests validate our claims, demonstrate that the MI-QCQP solver scales well, and that the relaxation is exact even when the sufficient conditions are violated, such as in NGNs with overlapping cycles and multiple fixed-pressure nodes. 
\end{abstract}

\begin{IEEEkeywords}
Gas flow equations, convex relaxation, uniqueness, energy function minimization, McCormick linearization.
\end{IEEEkeywords}

%%%%%%%%%%%%%%%%%%%%%%%%%%%%%%%%%%%%%%%%%%%%%%%%%%%%%%%%%%%%%%%%%%
\section{Introduction}
\allowdisplaybreaks
Natural gas has served as a critical energy source for decades, mainly for heating and electric power generation~\cite{mercado2015review}. Thanks to the higher ramping capabilities of gas-fired generators, electric power system operators could achieve higher penetration of uncertain and intermittent renewable generation. In addition, the discovery of substantial new supplies of natural gas in the U.S. has led to a new thrust in development of gas-centered technologies and analytical tools~\cite{MIT}. 

Natural gas produced at gas pits and refineries is primarily transported to customer locations via a continent-wide network of pipelines~\cite{mercado2015review}. The safe, reliable, and economical transportation of gas across these networks is ensured by gas system operators~\cite{Zlotnik2017coordinated}. Considering the scale of natural gas networks (NGN), and their coupling with electric power grids, a plethora of analytical and computational challenges can be envisaged. Stand-alone and gas-electric coupled versions of network expansion planning, optimal scheduling, least-cost procurement, and security analysis are examples of problems that have gained increasing research interest; see~\cite{Zlotnik2017coordinated}, \cite{backhaus2016convex}, \cite{bent2018market}, \cite{Zlotnik16b}, \cite{Schwele2019coordination}. These problems aim at optimizing varying objectives, while respecting network limitations and gas flow physics. 

The flow of natural gas on pipelines is governed by partial differential equations, which under steady-state assumptions, yield nonlinear equations relating pressures and gas flows~\cite{Singh18ACC}. These equations reveal that the pressure drops along a pipe in the direction of flow due to friction. However, a minimum pressure needs to be maintained at consumer nodes to satisfy gas contracts. Therefore, compressors are placed on selected pipelines to increase the pressure at their output based on a typically multiplicative~\cite{mercado2015review}, and rarely additive law~\cite{Vuffray2015monotonicity}. Operators need to solve the set of nonlinear equations governing gas flow in an NGN~\cite{abhi2017GM}: For each node, the operator fixes the gas pressure or gas injection rate to specified values. Given also the compression ratios, the GF task aims at finding the injections and pressures at all nodes, as well as the gas flows on all pipes. While solving the GF task is central for numerous NGN operations, it is hard to do so even under steady-state and balanced conditions for non-tree networks~\cite{mercado2015review}. %,~\cite{Dvijotham15}

The GF task is usually handled by Newton-Raphson (NR)-based solvers. However, their convergence can be sensitive to initialization~\cite{esquivel2012unified}. A semidefinite program (SDP)-based GF solver attaining a higher success probability than the NR scheme, is developed in \cite{abhi2017GM}. Nevertheless, the SDP based solver fails to solve the GF problem if the network state is far from the states considered in designing the solver. The necessity of proper initialization may be avoided for simpler networks without compressors as the flows and pressures may be found as optimal primal-dual solutions of a convex minimization~\cite{wolf2000energy}. Nevertheless, for practical meshed NGNs with compressors, an initialization-independent GF solver is still a research pursuit~\cite{Singh18ACC}. Setting scalability aside, if one uses a nonlinear solver for the GF task, the uniqueness of a solution becomes critical. References \cite{Vuffray2015monotonicity} and \cite{misra2015maximum} prove the uniqueness of a GF solution for NGNs with \emph{additive} compressors.

%Reference~\cite{Dvijotham15} proves the uniqueness of a GF solution, but only within a monotonicity domain that is hard to characterize in meshed networks or networks with compressors. The uniqueness result is broadened for all network topologies in~\cite{Vuffray2015monotonicity}, yet presuming additive compressors. 

The contribution of this work is on four fronts: First, Section~\ref{sec:unique} establishes that the nonlinear steady-state GF equations enjoy a unique solution even with multiplicative compressors. Building on \cite{Singh18ACC} where uniqueness was shown for GF setups with a single fixed-pressure node, here uniqueness is non-trivially generalized to setups with multiple fixed-pressure nodes. Second, Section~\ref{sec:efm} reformulates the GF task as a convex minimization. The obtained solver can handle GF setups with a single fixed-pressure node and compressors not on cycles. Third, Section~\ref{sec:cr} expands the analytical claims for the MI-QCQP gas flow solver of \cite{Singh18ACC}. Different from the convex minimization approach, this solver applies to any GF setup and any network. The MI-QCQP solver introduces binary variables to capture flow directions; relaxes the nonlinear GF equations to quadratic inequalities; and uses a carefully selected objective to promote the exactness of the relaxation. The relaxation is provably exact in NGNs with non-overlapping cycles and a single fixed-pressure node. This significantly extends the claim of \cite{Singh18ACC}, where exactness was proved for non-overlapping cycles and a single fixed-pressure node, but did not allow for compressors in cycles. Having compressors in cycles is a typical arrangement, e.g., when two compressors are connected in parallel. Fourth, to accelerate the MI-QCQP solver, the bilinear terms involved are handled through McCormick linearization. Numerical tests on meshed networks with overlapping cycles and multiple fixed-pressure nodes demonstrate that the MI-QCQP solver finds the unique GF solution even when the assumed sufficient conditions are violated.  

\section{Gas Flow Problem}\label{sec:model}
A natural gas network (NGN) can be represented by a directed graph $\mcG=(\mcN,\mcP)$. The nodes in the graph represent points of gas supply, demand, or network junctions. The edges are \emph{directed}, and represent pipelines or compressors. Nodes are indexed by $n\in\mcN:=\{1,\cdots,N\}$ and edges by $\ell\in\mcP:=\{1,\ldots,P\}$. Each edge $\ell=(m,n)$ is assigned a direction from the origin node $m$ to the destination node $n$. If $(m,n)\in\mcP$, then $(n,m)\notin\mcP$. For edges corresponding to pipes, this direction is selected arbitrarily. For edges denoting compressors, the direction coincides with the direction of gas flow, since compressors allow only unidirectional flow of gas. 

For each node $n\in\mcN$, let $q_n$ be the gas injection rate from node $n$ to the NGN. By convention, the gas injection $q_n$ is positive for gas source nodes; negative for demand nodes; and zero for junction nodes. Vector $\bq\in\mathbb{R}^N$ collects the gas injections across all nodes.

For each edge $\ell=(m,n)\in\mcP$, let $\phi_\ell$ denote its gas flow rate. By convention, the flow $\phi_\ell$ is positive if gas flows from node $m$ to $n$; and negative, otherwise. The conservation of mass at each node $n\in\mcN$ dictates that
\begin{equation}\label{eq:mc}
q_n = \sum_{\ell:(n,k) \in \mcP}\phi_\ell - \sum_{\ell:(k,n) \in \mcP}\phi_\ell.
\end{equation}
Under steady-state conditions, the input and output flows on a pipe are identical, and so gas injections are balanced at all times, that is $\sum_{n=1}^{N}q_n=0$. Because of this, from the $N$ linear equations in \eqref{eq:mc}, only $(N-1)$ are linearly independent. 

The topology of the NGN is captured by its edge-node incidence matrix $\bA\in\mathbb{R}^{P\times N}$ with entries
\begin{equation*}
A_{\ell,k}:=
\begin{cases}
+1&,~k=m\\
-1&,~k=n\\
0&,~\text{otherwise}
\end{cases}~\forall~\ell=(m,n)\in\mcP.
\end{equation*}
Using $\bA$, equation \eqref{eq:mc} can be compactly expressed as
\begin{equation}\label{eq:mc2}
\bA^\top\bphi=\bq.
\end{equation}
where vector $\bphi\in\mathbb{R}^P$ stacks the flows $\phi_\ell$'s along all edges.

For medium- and high-pressure networks, the gas flows on pipelines relate to nodal pressures through a set of nonlinear partial differential equations~\cite{Osiadacz87}, \cite{ThorleyTiley87}. These equations model the gas flow dynamics evolving across time and spatially along the pipeline length. However, simplifying assumptions such as ignoring friction, geographical tilt, variations in ambient temperature, and time-varying gas injections, yield the popular steady-state Weymouth equation~\cite{Wu99}. If $\psi_n>0$ denotes the \emph{squared} gas pressure at node $n\in\mcN$, the pressure drop across pipeline $\ell=(m,n)\in\mcP$ is given by
\begin{subequations}\label{eq:wey2}
	\begin{align}	\psi_m-\psi_n&=a_\ell\sign(\phi_\ell)\phi_\ell^2\label{seq:wey2a}\\
	\psi_n&\geq0\label{seq:wey2b}
	\end{align}
\end{subequations}
where parameter $a_\ell>0$ depends on physical properties of the pipeline~\cite{Osiadacz87}. The function $\sign(x)$ returns $+1$ if $x>0$; $-1$ if $x<0$; and $0$ if $x=0$. The absolute value in \eqref{seq:wey2a} signifies that pressure drops along the direction of flow. In particular, the drop in squared pressures is proportional to the squared flow. We will henceforth refer to $\psi_m$ as pressure rather than squared pressure for brevity. Let us collect all $\psi_n$'s in $\bpsi\in\mathbb{R}^N$. 

To enable the desired flow of gas in an NGN while maintaining pressures within acceptable limits, system operators install compressors at selected pipelines. A pipeline hosting a compressor can be modeled by an ideal compressor which increases the gas pressure, followed by a lossy pipeline that incurs a pressure drop per \eqref{eq:wey2}. Apparently, the gas flows on the two edges are identical. Let the subset of edges hosting ideal compressors be $\mcP_a\subset\mcP$. The edges in $\mcP_a$ are also referred to as \emph{active pipelines}. The pressures across an active pipeline or compressor $\ell=(m,n)\in\mcP_a$ are related as
\begin{subequations}\label{eq:comp}
	\begin{align}
	\psi_n&=\alpha_\ell\psi_m\label{seq:compa}\\
	\phi_\ell&\geq0,\label{seq:compb}
	\end{align}
\end{subequations}
where $\alpha_\ell>0$ is the multiplicative compression factor for compressor $\ell$. The unidirectional flow permitted for a compressor is enforced by \eqref{seq:compb}. The remaining edges, that is the edges not hosting ideal compressors, constitute the set $\bmcP_a:=\mcP\setminus\mcP_a$ and abide by \eqref{eq:wey2} instead of \eqref{eq:comp}.

In an NGN, a node $r\in\mcN$ is selected as a reference node. Its pressure is kept fixed. Given $\psi_r$, if the nodal pressures $\bpsi$ are known, the flows $\bphi$ can be readily computed; and vice versa. This fact follows immediately from \eqref{eq:wey2}--\eqref{eq:comp}, and is itemized as the next lemma to be used in subsequent arguments.

\begin{lemma}\label{le:fp}
Given a reference pressure $\psi_r$ for some $r\in\mcN$, a pair $(\bphi,\bpsi)$ satisfying \eqref{eq:wey2}--\eqref{eq:comp} is uniquely characterized by either $\bphi$ or $\bpsi$. 
\end{lemma}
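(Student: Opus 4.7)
The plan is to use the fact that \eqref{eq:wey2}--\eqref{eq:comp} provide, on every edge, an invertible relation between the two endpoint pressures (given the edge flow), and, on every passive edge, also an invertible relation between the edge flow and the pressure difference. With $\mcG$ connected and $\psi_r$ fixed, these local invertibility properties let information propagate from $r$ throughout the network along any spanning tree.

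For the direction $\bphi \Rightarrow \bpsi$, I would pick any spanning tree $\mcT$ of $\mcG$ rooted at $r$ and walk outward from $r$. On each edge $\ell=(m,n)$ of $\mcT$ with one endpoint already assigned a pressure, equation \eqref{seq:wey2a} (if $\ell\in\bmcP_a$) or \eqref{seq:compa} (if $\ell\in\mcP_a$) yields the other endpoint's pressure in closed form, solved in the appropriate direction. This pins every entry of $\bpsi$ uniquely; mutual consistency on the non-tree edges is automatic from the hypothesis that $(\bphi,\bpsi)$ already satisfies every equation in \eqref{eq:wey2}--\eqref{eq:comp}.

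For the direction $\bpsi \Rightarrow \bphi$, on each passive edge $\ell=(m,n)\in\bmcP_a$ I would invert \eqref{seq:wey2a} to obtain $\phi_\ell=\sign(\psi_m-\psi_n)\sqrt{|\psi_m-\psi_n|/a_\ell}$, which is single-valued because $x\mapsto\sign(x)x^2$ is a strictly increasing bijection on $\mathbb{R}$. The step I expect to be the main obstacle is this same direction on an \emph{active} edge, since \eqref{seq:compa} links the two endpoint pressures by a law that does not involve $\phi_\ell$. My intended remedy is to appeal to the physical modeling assumption stated just above \eqref{eq:comp}, namely that every compressor sits in series with a downstream passive pipe sharing its flow; then $\phi_\ell$ on the active edge equals the already-pinned flow on that companion pipe by mass conservation at the zero-injection junction between them.
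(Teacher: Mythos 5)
Your proof is correct and fills in exactly what the paper leaves implicit: the paper gives no proof of this lemma (it is asserted to ``follow immediately'' from \eqref{eq:wey2}--\eqref{eq:comp}), and its intended reasoning is precisely your two directions---spanning-tree propagation of pressures given the flows and $\psi_r$, and inversion of the strictly increasing bijection $x\mapsto\sign(x)x^2$ to recover flows from pressure drops on lossy pipes. The one step that is genuinely not immediate from \eqref{eq:wey2}--\eqref{eq:comp} alone, namely the flow on a compressor edge (since \eqref{seq:compa} does not involve $\phi_\ell$), you resolve correctly by invoking the paper's modeling convention that every ideal compressor sits in series with a lossy pipe carrying the identical flow, which is exactly how the paper intends the claim to be read.
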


The task of finding $\bphi$ or $\bpsi$ given a combination of nodal injections and pressures constitutes the \emph{gas flow} (GF) problem. Oftentimes, gas supply nodes are tuned to maintain a fixed pressure while injecting variable amounts of gas to meet the prescribed pressure under variable demands~\cite{Zlotnik2017coordinated}, \cite{QLi2003solvingGF}. Let set $\mcN_\psi\subset \mcN$ consist of all nodes with fixed pressures $\psi_n$'s. The reference node $r$ belongs to $\mcN_\psi$ by definition. Its complement set $\mcN_q:=\mcN\setminus \mcN_\psi$ consists of all nodes with fixed injections $q_n$'s. Then, the GF problem can be formally stated now.

\begin{definition}\label{de:GF}
Given pressures $\psi_n$ for $n\in\mcN_\psi$; injections $q_n$ for $n\in\mcN_q$; the ratios $\alpha_\ell$ for all compressors $\ell\in\mcP_a$; and the friction parameters $a_\ell$ for all lossy pipes $\ell\in\bmcP_a$, the GF problem aims at finding the triplet $(\bpsi,\bphi,\bq)$ satisfying the GF equations \eqref{eq:mc2}--\eqref{eq:comp}. 
\end{definition}

The GF task involves $N-1+P$ equations over $N-1+P$ unknowns. It can be posed as the feasibility problem
\begin{align*}\label{eq:G1}
\mathrm{find}~&~\{\bq,\bphi,\boldsymbol{\psi}\}\tag{G1}\\
\mathrm{s.to}~&~ \eqref{eq:mc2}-\eqref{eq:comp}\\
~&~\textrm{given}~\{q_n\}_{n\in\mcN_q}~\text{and}~\{\psi_n\}_{n\in\mcN_\psi}.
\end{align*}
Albeit \eqref{eq:mc2} and \eqref{eq:comp} are linear, the piecewise quadratic Weymouth equation in \eqref{eq:wey2} is non-convex, while the requirement $\{\phi_\ell \geq 0\}_{\ell\in\mcP_a}$ further complicates the task. The GF problem is typically solved using the Newton-Raphson's method, yet its convergence depends on the initialization~\cite{esquivel2012unified}, \cite{QLi2003solvingGF}, \cite{abhi2017GM}. Commercially available software require careful manual tuning by gas network operator personnel, though that could be attributed to more detailed models of NGN components.

A popular rendition of the GF problem considers the reference node as the only fixed-pressure node, and all other nodes as fixed-injection nodes~\cite{misra2015GP}, \cite{abhi2017GM}, \cite{Singh18ACC}. For this rendition, solving the GF problem becomes trivial for a tree network by inverting \eqref{eq:mc2} and using Lemma~\ref{le:fp}. However, for a meshed NGN, solving the GF problem remains non-trivial. Before developing new GF solvers, the next section establishes that the GF problem in \eqref{eq:G1} enjoys a unique solution. 

%%%%%%%%%%%%%%%%%%%%%%%%%%%%%%%%%%%%%%%%%%%%%%%%%%%%%%%%%%%%%%%%%%%%%%%%%%%%%%
\section{Uniqueness of the GF Solution}\label{sec:unique}
We commence with the uniqueness of the GF task under the setup of a single fixed-pressure node, proved in~\cite[Th. 1]{Singh18ACC}. 

\begin{theorem}[\cite{Singh18ACC}]\label{th:GFunique1}  
If $\mcN_\psi=\{r\}$ and $\mcN_q=\mcN\setminus \{r\}$, the gas flow problem \eqref{eq:G1} has a unique solution, if feasible.
\end{theorem}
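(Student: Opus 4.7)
The plan is a monotonicity argument on the difference of two hypothesized solutions. By Lemma~\ref{le:fp}, once $\psi_r$ is fixed, $\bphi$ uniquely determines $\bpsi$, and then \eqref{eq:mc2} fixes $\bq$, so it suffices to show $\bphi$ is unique. I would assume two feasible triples $(\bphi^i,\bpsi^i,\bq)$, $i=1,2$, share $\psi_r$ and the injections $\{q_n\}_{n\neq r}$, and set $\Delta\bphi:=\bphi^1-\bphi^2$ and $\Delta\bpsi:=\bpsi^1-\bpsi^2$. Since both triples satisfy \eqref{eq:mc2} with the same $\bq$, one has $\bA^\top\Delta\bphi=\bzero$, which immediately yields the orthogonality identity
\begin{align*}
0 \;=\; (\bA^\top\Delta\bphi)^\top\Delta\bpsi \;=\; \sum_{\ell=(m,n)\in\mcP}\Delta\phi_\ell\,\Delta(\psi_m-\psi_n).
\end{align*}

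For each non-active pipe $\ell\in\bmcP_a$, substituting \eqref{seq:wey2a} recasts the summand as $a_\ell\Delta\phi_\ell\bigl(\phi_\ell^1|\phi_\ell^1|-\phi_\ell^2|\phi_\ell^2|\bigr)$, which is non-negative by strict monotonicity of $\phi\mapsto\phi|\phi|$ and vanishes only when $\Delta\phi_\ell=0$. The main obstacle is that for compressor edges $\ell\in\mcP_a$, \eqref{seq:compa} gives $\Delta(\psi_m-\psi_n)=(1-\alpha_\ell)\Delta\psi_m$, whose sign is independent of $\Delta\phi_\ell$, so those terms cannot be bounded edge-by-edge from the identity alone.

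To overcome this, I would parametrize $\Delta\bphi$ along a fundamental cycle basis of $\ker(\bA^\top)$ relative to a spanning tree of $\mcG$, reducing the unknowns to a vector $\blambda$ of cycle-flow coefficients. Each fundamental cycle yields a scalar closure equation on pressures: a cycle containing no compressor gives $\sum_\ell c_\ell a_\ell\phi_\ell|\phi_\ell|=0$ with $c_\ell\in\{-1,+1\}$ encoding cycle orientation, strictly monotone in its own coefficient; a cycle containing compressors produces, after propagating \eqref{seq:compa} around the cycle, a weighted analogue $\sum_\ell w_\ell a_\ell\phi_\ell|\phi_\ell|=C$ with strictly positive weights $w_\ell$ formed from products of compressor ratios along subpaths, again strictly monotone in its coefficient. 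Globally, the resulting system $\bF(\blambda)=\bzero$ has a Jacobian that inherits strict diagonal dominance of the correct sign from the scalar strict monotonicity of $\phi|\phi|$, so $\bF$ admits at most one root, forcing $\blambda=\bzero$ and hence $\Delta\bphi=\bzero$.

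Finally, having $\Delta\bphi=\bzero$, the Weymouth and compressor relations propagate through the connected graph $\mcG$ from the shared $\psi_r$ to give $\Delta\bpsi=\bzero$, and then $\Delta\bq=\bzero$ follows from \eqref{eq:mc2}, completing uniqueness via Lemma~\ref{le:fp}.
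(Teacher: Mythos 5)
First, a point of reference: this paper does not actually prove Theorem~\ref{th:GFunique1} --- it is imported from \cite{Singh18ACC} --- so your attempt can only be judged against the machinery the paper develops around it (Lemma~\ref{le:1path} and Lemma~\ref{le:networkflow}). Your opening orthogonality identity and your diagnosis of why it fails on compressor edges are both correct, but the step you use to overcome this, the cycle-space reduction, has a genuine gap. The closure equation for a cycle $\mcC$ containing compressors is mis-stated: propagating \eqref{seq:wey2a} and \eqref{seq:compa} around $\mcC$ from a node $v_0$ gives an identity of the form $(1-A)\,\psi_{v_0}=\sum_{\ell\in\mcC} w_\ell\, a_\ell \phi_\ell|\phi_\ell|$, where $A$ is the product of the ratios $\alpha_\ell^{\pm1}$ encountered around the cycle and $w_\ell>0$. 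Unless $A=1$ or $\mcC$ contains the reference node, the right-hand side of your equation is \emph{not} a constant $C$: it contains the unknown pressure $\psi_{v_0}$, which in turn depends on the flows along the tree path from $r$ to $v_0$, hence on many of the cycle coefficients in $\blambda$ (tree edges lie on several fundamental cycles). So the system $\bF(\blambda)=\bzero$ is not a collection of equations each monotone in its own coefficient; the equations are coupled through nodal pressures in a way governed by the compressor gains.

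The uniqueness mechanism you then invoke --- ``the Jacobian inherits strict diagonal dominance'' --- is asserted rather than proved, and it is not even the right structural property in the compressor-free case: there the Jacobian is $\bN^\top\bD\bN$ with $\bD=\diag(2a_\ell|\phi_\ell|)\succeq 0$ and $\bN$ the cycle-indicator matrix, which is positive semidefinite (indeed $\bF$ is the gradient of the strictly convex energy $\sum_\ell \tfrac{a_\ell}{3}|\phi_\ell(\blambda)|^3$) but generally \emph{not} diagonally dominant when cycles share edges; uniqueness there follows from strict convexity. Once multiplicative compressors appear, the extra terms $(1-A)\,\partial\psi_{v_0}/\partial\lambda_i$ destroy the symmetric gradient structure, and no dominance, definiteness, or P-matrix property is available without substantial additional work --- this is precisely the crux of the theorem, and the reason the paper emphasizes that the convexity-based uniqueness arguments of \cite{Vuffray2015monotonicity}, \cite{misra2015maximum} cover only \emph{additive} compressors. (Even granting a nonsingular Jacobian everywhere, global injectivity of $\bF$ would still require a univalence argument.) The route consistent with what the paper does prove is different in kind: the flow difference $\bphi'-\bphi$ is a circulation, so it can be decomposed into sign-aligned cycles; one then applies the strict monotonicity of pressure drops (the argument of Lemma~\ref{le:1path}) around such a cycle, together with pressure propagation from the reference node, to reach a contradiction --- no Jacobian or variational structure is needed.
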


Although the single fixed-pressure setup has been studied widely, setups with multiple fixed-pressure nodes are of critical interest too. This is because gas is typically injected at supplier sites using a controller that maintains constant pressure, rather than constant rate. To address this need, this section builds on Th.~\ref{th:GFunique1} and establishes the uniqueness of the steady-state GF equations for any $(\mcN_\psi,\mcN_q)$ setup. Before doing so, let us briefly review some graph theory preliminaries. 

A directed graph $\mcG=(\mcN, \mcP)$ is connected if there exists a sequence of adjacent edges between any two nodes. All graphs considered in this work are assumed to be connected. A sequence of adjacent edges between nodes $m$ and $n$ constitutes a \emph{path} $\mcP_{mn}\subset\mcP$. The directionality assigned to path $\mcP_{mn}$ is from $m$ to $n$. Note that nodes $m$ and $n$ could be connected by multiple paths. Thus, with slight abuse in notation, path $\mcP_{mn}$ shall represent any arbitrary path between $m$ and $n$, unless additional conditions are provided. For path $\mcP_{mn}$, we can define an \emph{indicator vector} $\bpi^{mn}\in\{0,1\}^P$ with $\ell-$th entry
\begin{equation*}%\label{eq:pi}
\pi^{mn}_{\ell}:=
\begin{cases}
0&,~\text{if edge } \ell\notin \mcP_{mn}\\
+1&,~\text{if direction of }\ell\text{ agrees with path direction}\\
-1&,~\text{otherwise.}\\
\end{cases}
\end{equation*}

%Given nodes $m$ and $n$, the distance between them $d_{m-n}$ counts the edges in the shortest path between nodes $m$ and $n$.
%A minimal set of edges $\mcP_\mcT$ preserving the connectivity of a graph constitutes a \emph{spanning tree} of $\mcG$; is denoted by $\mcT:=(\mcM,\mcP_\mcT)$; and $|\mcP_\mcT|=|\mcM|-1$. The edges not belonging to a spanning tree $\mcT$ are referred to as \emph{links} with respect to $\mcT$. 

A \emph{cycle} is a sequence of adjacent edges (without edge or node repetition) that starts and ends at the same node. With a slight abuse of terminology, the statement `cycle $\mcC$ \emph{contains} node $i$' will mean that there exists an edge in $\mcC$ that is incident to node $i$. For any cycle $\mcC$, we can select an arbitrary direction and define its indicator vector $\bn^\mcC$ with $\ell$-th entry
\begin{equation*}%\label{eq:nc}
n^\mcC_{\ell}=
\begin{cases}
0&,~\text{if edge } \ell\notin \mcC\\
+1&,~\text{if direction of }\ell\text{ agrees with cycle direction}\\
-1&,~\text{otherwise.}\\
\end{cases}
\end{equation*}
%Define the \emph{distance} of cycle $\mcC$ from node $n$ as
%\begin{equation}\label{eq:distance}
%d(\mcC):=\min_{i \in \mcC} d_{i-n} 
%\end{equation}
A \emph{tree} is a connected graph with no cycles. 

%Given a spanning tree, a \emph{fundamental cycle} is a cycle formed by adding a link to the spanning tree. 

After the graph theoretic preliminaries, we proceed with the uniqueness of the GF solution for the general GF setup. This proof builds upon the ensuing two lemmas, which are proved in the appendix.
 
\begin{lemma}\label{le:1path}
Consider path $\mcP_{mn}$ along edges $\{\ell_1,\dots,\ell_k\}$ with indicator $\bpi^{mn}$. For fixed pressures $\psi_m$ and $\psi_n$, if flow vectors $\bphi$ and $\bphi'$ with $\bphi\neq \bphi'$, satisfy \eqref{eq:wey2}--\eqref{eq:comp}, they cannot satisfy 
\begin{subequations}\label{eq:le2}
\begin{align}	
\sign(\bphi'-\bphi)\odot\bpi^{mn}&>\bzero~~\textrm{or}~\label{eq:le2a}\\
\sign(\bphi'-\bphi)\odot\bpi^{mn}&<\bzero \label{eq:le2b}
\end{align}
\end{subequations}
where the strict inequalities are understood entrywise. 	
\end{lemma}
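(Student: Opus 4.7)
The plan is to traverse the path $\mcP_{mn}$ from $m$ to $n$ and express the pressure at $n$ as an affine-plus-strictly-monotone function of the path-directed flows; since the endpoints $\psi_m$ and $\psi_n$ are fixed, this will yield an equality in the path flows that cannot accommodate a strict sign pattern as in \eqref{eq:le2a} or \eqref{eq:le2b}.

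First, I would reparameterize using the path-oriented flow $f_\ell := \pi^{mn}_\ell \phi_\ell$ (and analogously $f'_\ell$) for $\ell\in\mcP_{mn}$, so that hypothesis \eqref{eq:le2a} becomes $f'_\ell > f_\ell$ for every $\ell\in\mcP_{mn}$ and \eqref{eq:le2b} becomes the reverse. A short case split on $\pi^{mn}_\ell=\pm 1$ turns the Weymouth law \eqref{seq:wey2a} for a pipe edge $\ell\in\bmcP_a\cap\mcP_{mn}$ into the uniform statement that walking across $\ell$ in the path direction decreases the pressure by $a_\ell\, f_\ell|f_\ell|$. The same case split applied to \eqref{seq:compa} for a compressor $\ell\in\mcP_a\cap\mcP_{mn}$ shows that walking across $\ell$ multiplies the pressure by a strictly positive factor $\beta_\ell$ (equal to $\alpha_\ell$ when $\pi^{mn}_\ell=+1$ and to $1/\alpha_\ell$ when $\pi^{mn}_\ell=-1$), independent of the flow.

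A straightforward induction on the path length $k$ then yields
\begin{equation*}
\psi_n \;=\; C\,\psi_m \;-\; \sum_{i\,:\,\ell_i\in\bmcP_a\cap\mcP_{mn}} c_i\, a_{\ell_i}\, f_i|f_i|,
\end{equation*}
where $C := \prod_{j\,:\,\ell_j\in\mcP_a\cap\mcP_{mn}}\beta_j > 0$ and each pipe coefficient $c_i := \prod_{j>i\,:\,\ell_j\in\mcP_a\cap\mcP_{mn}}\beta_j > 0$ collects the compressor factors downstream of pipe $\ell_i$ on the path. Writing the same identity for $\bphi'$ and subtracting (the fixed endpoints $\psi_m,\psi_n$ cancel) gives
\begin{equation*}
\sum_{i\,:\,\ell_i\in\bmcP_a\cap\mcP_{mn}} c_i\, a_{\ell_i}\bigl(f'_i|f'_i| - f_i|f_i|\bigr) \;=\; 0.
\end{equation*}

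The conclusion now follows from the strict monotonicity of $x\mapsto x|x|$ on $\mathbb{R}$: under \eqref{eq:le2a}, $f'_i>f_i$ at every pipe edge gives $f'_i|f'_i|>f_i|f_i|$, and since $c_i,a_{\ell_i}>0$, every summand in the last display is strictly positive, contradicting the zero sum; \eqref{eq:le2b} is ruled out symmetrically. The main obstacle I anticipate is purely bookkeeping: correctly telescoping the compressor ratios so that each pipe contribution carries exactly the downstream-compressor product $c_i$. The cleanest execution is the induction on $k$ outlined above, appending one edge at a time and separately treating the two edge types at the newly appended edge, rather than expanding the full recursion in one stroke.
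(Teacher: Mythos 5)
Your proof is correct and reaches the conclusion by a genuinely different route than the paper's. The paper exploits the same strict monotonicity of $\phi\mapsto a_\ell\sign(\phi)\phi^2$, but it never writes a closed-form expression for $\psi_n$: it first derives, for each lossy pipe $\ell_i\in\mcP_{mn}$, the two-solution inequality $\psi_{m_i}'-\psi_{m_{i+1}}'>\psi_{m_i}-\psi_{m_{i+1}}$ [cf.~\eqref{eq:lossypipe}], and then propagates $\psi_{m_i}'\leq\psi_{m_i}$ node by node along the path (strict once a lossy pipe has been crossed), using \eqref{seq:compa} to carry the inequality across compressors, until it reaches $\psi_n'<\psi_n$, contradicting the fixed endpoint. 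Your telescoped identity $\psi_n = C\,\psi_m - \sum_i c_i a_{\ell_i} f_i|f_i|$ replaces this sequential inequality propagation with a single exact algebraic identity; subtracting its two instances (for $\bphi$ and $\bphi'$) cancels the fixed endpoints and the contradiction drops out of a sign count. This is arguably cleaner: it makes explicit that compressors enter only through the flow-independent positive constants $C$ and $c_i$ (which is precisely why multiplicative compression causes no trouble), and it treats \eqref{eq:le2a} and \eqref{eq:le2b} in one symmetric stroke.

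One caveat you should patch: your final contradiction requires the sum $\sum_i c_i a_{\ell_i}\bigl(f_i'|f_i'|-f_i|f_i|\bigr)$ to be nonempty, i.e., the path must contain at least one lossy pipe. If $\mcP_{mn}$ consisted only of compressor edges, your identity degenerates to $\psi_n=C\,\psi_m$ for both flow vectors and nothing is contradicted --- and indeed, for a single ideal-compressor edge with both end pressures fixed, any two distinct nonnegative flows satisfy \eqref{eq:wey2}--\eqref{eq:comp}, so the claim genuinely relies on the paper's modeling convention. The paper closes this hole explicitly: every compressor is modeled as an ideal compressor in series with a lossy pipe, so consecutive compressor edges cannot occur along a path and (excluding the degenerate single-compressor-edge path, whose interior endpoint is an artificial junction rather than a fixed-pressure node) the path necessarily contains a lossy pipe. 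Add one sentence invoking that convention and your argument is complete.
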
 

To get some intuition, suppose that $\bpi^{mn}$ takes the value of $+1$ for edges $\{\ell_1,\dots,\ell_k\}$, and $0$ for the remaining edges. According to Lemma~\ref{le:1path}, if two pairs $(\bphi,\bpsi)$ and $(\bphi',\bpsi')$ satisfy \eqref{eq:wey2}--\eqref{eq:comp} with $\psi_m=\psi_m'$ and $\psi_n=\psi_n'$, then the flows along $\mcP_{mn}$ cannot uniformly increase from $\bphi$ to $\bphi'$. In other words, $\phi_\ell'>\phi_\ell$ cannot occur simultaneously for all $\ell\in\mcP_{mn}$. Flows cannot uniformly decrease either ($\phi_\ell'<\phi_\ell$ for all $\ell\in\mcP_{mn}$). This holds merely because the pressure drop across a pipe decreases monotonically with gas flow from and compressors perform a linear scaling [cf.~\eqref{eq:wey2} and \eqref{eq:comp}].

The next lemma describes an interesting effect on how gas flows get redistributed when gas injections change. 

\begin{lemma}\label{le:networkflow}
Consider two pairs $(\bq,\bphi)$ and $(\bq',\bphi')$ satisfying \eqref{eq:mc2}. If $\bq\neq\bq'$, there exists a path $\mcP_{mn}$ between nodes $m$ and $n$ such that
\begin{subequations}\label{eq:netflow}
\begin{align}
&\sign(\bphi'-\bphi)\odot\bpi^{mn}>\bzero\label{seq:netflowa}\\
&q_m'>q_m~~\text{and}~~ q_n'<q_n\label{seq:netflowb}
\end{align}
\end{subequations}
where $\bpi^{mn}$ is the indicator vector for $\mcP_{mn}$.
\end{lemma}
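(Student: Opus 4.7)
Define the difference vectors $\Delta\bphi := \bphi' - \bphi$ and $\Delta\bq := \bq' - \bq$. Subtracting the two instances of \eqref{eq:mc2} yields $\bA^\top \Delta\bphi = \Delta\bq$, and summing all components gives $\bone^\top \Delta\bq = 0$ because each column of $\bA$ sums to zero. Since $\bq \neq \bq'$, there must exist some node $m$ with $\Delta q_m > 0$; the zero-sum property then forces the existence of at least one node with $\Delta q_n < 0$. The task is to connect such an $m$ to such an $n$ via a path along which flows have uniformly shifted in the path's direction.

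The plan is to build an auxiliary directed graph $\mcG^+$ on the node set $\mcN$ as follows: for every edge $\ell = (i,j) \in \mcP$, include the arc $(i,j)$ in $\mcG^+$ if $\Delta\phi_\ell > 0$, include the arc $(j,i)$ if $\Delta\phi_\ell < 0$, and discard the edge if $\Delta\phi_\ell = 0$. By construction, a directed path from $m$ to $n$ in $\mcG^+$ corresponds to a path $\mcP_{mn}$ in $\mcG$ with the property that every edge $\ell \in \mcP_{mn}$ whose original orientation agrees with the path direction (i.e., $\pi^{mn}_\ell = +1$) has $\Delta\phi_\ell > 0$, while every edge whose orientation opposes the path direction ($\pi^{mn}_\ell = -1$) has $\Delta\phi_\ell < 0$. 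In either case, $\sign(\Delta\phi_\ell)\,\pi^{mn}_\ell = +1$, which is exactly~\eqref{seq:netflowa} restricted to the edges of the path.

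It thus suffices to show that some node $n$ with $\Delta q_n < 0$ is reachable from $m$ in $\mcG^+$. I would argue this by contradiction: let $\mcS \subseteq \mcN$ denote the set of nodes reachable from $m$ via directed paths in $\mcG^+$ and suppose $\Delta q_i \geq 0$ for every $i \in \mcS$. Summing $\bA^\top\Delta\bphi = \Delta\bq$ over the rows indexed by $\mcS$ gives
\begin{equation*}
\sum_{i\in\mcS}\Delta q_i \;=\; \sum_{\ell\in\partial\mcS}\epsilon_\ell\,\Delta\phi_\ell,
\end{equation*}
where $\partial\mcS$ is the edge boundary of $\mcS$ in $\mcG$ and $\epsilon_\ell = +1$ if $\ell$'s origin lies in $\mcS$ and $\epsilon_\ell = -1$ otherwise (edges internal to $\mcS$ cancel). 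For each $\ell \in \partial\mcS$, maximality of $\mcS$ forbids an arc of $\mcG^+$ leaving $\mcS$; inspecting the two cases of $\epsilon_\ell$ shows this forces $\epsilon_\ell \Delta\phi_\ell \leq 0$ edge-by-edge, so the right-hand side is $\leq 0$. The left-hand side, however, is $\geq \Delta q_m > 0$, a contradiction. Hence some $n \in \mcS$ has $\Delta q_n < 0$, proving \eqref{seq:netflowb} jointly with \eqref{seq:netflowa}.

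The main obstacle I anticipate is the bookkeeping in the cut argument: getting the signs $\epsilon_\ell$ right and verifying that the reorientation convention for $\mcG^+$ matches the sign pattern on $\partial\mcS$ imposed by the non-escape condition. Once that cut computation is set up cleanly, the rest is a direct reinterpretation of the auxiliary path via the definition of $\bpi^{mn}$.
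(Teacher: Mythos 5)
Your proof is correct, but it takes a genuinely different route from the paper's. The paper also starts by forming the balanced difference flow $(\tbq,\tbphi)$ with $\bA^\top\tbphi=\tbq$ and $\bone^\top\tbq=0$, but then it reduces to a single-source single-sink setting: it augments $\mcG$ with a super-source $s$ attached to all nodes with $\tdq_n>0$ and a super-sink $t$ attached to all nodes with $\tdq_n<0$, lumps the injections onto the artificial edges, and invokes a flow-decomposition result (\cite{korte2012combinatorial}, stated as Lemma~\ref{le:st}) to extract an $s$--$t$ path whose flows are directed from $s$ to $t$; stripping the two artificial edges yields $\mcP_{mn}$. You instead reorient each edge along the sign of $\Delta\phi_\ell$ to form $\mcG^+$, take the set $\mcS$ of nodes reachable from a node $m$ with $\Delta q_m>0$, and run a cut argument: summing $\bA^\top\Delta\bphi=\Delta\bq$ over $\mcS$ and using that no arc of $\mcG^+$ exits $\mcS$ forces the boundary flux to be nonpositive, contradicting $\sum_{i\in\mcS}\Delta q_i\geq\Delta q_m>0$ unless some $n\in\mcS$ has $\Delta q_n<0$. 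Your sign bookkeeping checks out in both cases of $\epsilon_\ell$, and the reinterpretation of a directed $\mcG^+$-path as \eqref{seq:netflowa} (understood, as the paper does, entrywise over the path's edges) is exactly right. What each approach buys: the paper's proof is shorter because it outsources the combinatorial core to a textbook theorem, at the price of the augmentation construction and an external dependence; yours is self-contained and elementary---in effect you re-prove the special case of flow decomposition that is actually needed---which makes the argument portable and arguably more transparent about where the zero-sum and conservation structure are used.
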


Lemma~\ref{le:networkflow} predicates that if gas injections change, there exists a path: \emph{i)} along which flows increase uniformly; \emph{ii)} the source node of the path has increased injection; and \emph{iii)} the destination node has decreased injection. Lemma~\ref{le:networkflow} has been established in~\cite{Vuffray2015monotonicity} via mathematical induction; see the appendix for an alternative perhaps more intuitive proof.

Using Theorem~\ref{th:GFunique1} and Lemmas~\ref{le:1path}--\ref{le:networkflow}, we next prove the uniqueness of the GF task under the general setup.

\begin{theorem}\label{th:GFunique}  
The gas flow problem \eqref{eq:G1} has a unique solution, if feasible.
\end{theorem}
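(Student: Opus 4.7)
The plan is to argue by contradiction and reduce the general setup to the single fixed-pressure case already handled by Theorem~\ref{th:GFunique1}. Suppose two distinct feasible solutions $(\bpsi,\bphi,\bq)$ and $(\bpsi',\bphi',\bq')$ exist for \eqref{eq:G1}. By definition of the problem they agree on the specified quantities, that is $\psi_n=\psi_n'$ for every $n\in\mcN_\psi$ and $q_n=q_n'$ for every $n\in\mcN_q$. The proof will proceed in two claims: first show that the injection vectors must coincide everywhere ($\bq=\bq'$), and then invoke Theorem~\ref{th:GFunique1} to conclude that the pair $(\bphi,\bpsi)$ is uniquely determined.

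For the first claim, assume for contradiction $\bq\neq\bq'$. Because both pairs $(\bq,\bphi)$ and $(\bq',\bphi')$ satisfy the mass-conservation law \eqref{eq:mc2}, Lemma~\ref{le:networkflow} produces a path $\mcP_{mn}$ together with its indicator $\bpi^{mn}$ such that $\sign(\bphi'-\bphi)\odot\bpi^{mn}>\bzero$, $q_m'>q_m$, and $q_n'<q_n$. In particular $q_m\neq q_m'$ and $q_n\neq q_n'$, so neither $m$ nor $n$ can lie in $\mcN_q$ (where injections are specified and therefore agree). Consequently $m,n\in\mcN_\psi$, which forces $\psi_m=\psi_m'$ and $\psi_n=\psi_n'$. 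But this is precisely the configuration forbidden by Lemma~\ref{le:1path}, yielding the contradiction. Hence $\bq=\bq'$.

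For the second claim, pick the reference node $r\in\mcN_\psi$, for which $\psi_r=\psi_r'$. With $\bq=\bq'$ now established on \emph{every} node, both triples $(\bpsi,\bphi,\bq)$ and $(\bpsi',\bphi',\bq)$ solve the restricted GF instance in which the reference node $r$ is the single fixed-pressure node and the remaining nodes carry specified injections $\{q_n\}_{n\in\mcN\setminus\{r\}}$. Theorem~\ref{th:GFunique1} applies to this restricted instance and guarantees a unique solution; therefore $\bphi=\bphi'$, and uniqueness of $\bpsi$ then follows from Lemma~\ref{le:fp}. The agreement on the other fixed pressures $\{\psi_n\}_{n\in\mcN_\psi\setminus\{r\}}$ is automatic.

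The main obstacle is the first claim, specifically recognizing why the endpoints $m,n$ produced by Lemma~\ref{le:networkflow} must belong to $\mcN_\psi$; once that localization is in place, the clash with Lemma~\ref{le:1path} is immediate. Everything else is bookkeeping plus a direct appeal to Theorem~\ref{th:GFunique1}. Note that no additional topological assumption on cycles, compressor placement, or existence of multiple reference nodes is needed — the monotonicity content of Lemmas~\ref{le:1path} and~\ref{le:networkflow} already encodes all the nonlinear physics required.
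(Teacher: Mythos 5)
Your proof is correct and follows essentially the same route as the paper's: invoke Lemma~\ref{le:networkflow} to extract a path whose endpoints have changed injections, observe those endpoints must lie in $\mcN_\psi$ so their pressures are pinned, contradict Lemma~\ref{le:1path} to conclude $\bq=\bq'$, and finish with Theorem~\ref{th:GFunique1}. The only cosmetic difference is that the paper explicitly splits off the $|\mcN_\psi|=1$ case up front, while your argument absorbs it uniformly; both are fine.
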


\begin{proof}
Proving by contradiction, assume $(\bq,\bphi,\bpsi)$ and $(\bq',\bphi',\bpsi')$ are two distinct solutions of \eqref{eq:G1}. Consider the GF setup where $|\mcN_\psi|>1$; the special case of $|\mcN_\psi|=1$ is covered by Theorem~\ref{th:GFunique1}. If $\bq\neq\bq'$, then Lemma~\ref{le:networkflow} implies that there exists a path $\mcP_{mn}$ with indicator vector $\bpi^{mn}$ satisfying \eqref{seq:netflowa}. Moreover, it holds that $q_m'>q_m$ and $q_n'<q_n$ from \eqref{seq:netflowb}. By definition, gas injections $q_i$ are fixed for all nodes $i\in\mcN_q$. Therefore, nodes $m$ and $n$ cannot be fixed-injection nodes. They have to be fixed-pressure nodes belonging to $\mcN_\psi$, implying $\psi_m=\psi_m'$ and $\psi_n=\psi_n$. However, with the pressures at nodes $m$ and $n$ fixed, the inequality \eqref{seq:netflowa} contradicts Lemma~\ref{le:1path}. Hence, the assumption of unequal injections is refuted, implying $\bq=\bq'$.

Given $\bq$ and the reference pressure $\psi_r$, Theorem~\ref{th:GFunique1} asserts that there is unique triplet $(\bq,\bphi,\bpsi)$ satisfying the GF equations. Since $\bq=\bq'$, the triplets $(\bq,\bphi,\bpsi)$ and $(\bq',\bphi',\bpsi')$ have to coincide, which completes the proof.
\end{proof}

The uniqueness claim of Theorem~\ref{th:GFunique} is fairly general, since it applies to any NGN topology and any GF setup with a single or multiple fixed-pressure nodes. Having established uniqueness, the next two sections develop a suite of GF solvers: Section~\ref{sec:efm} builds upon an existing convex solver for GF setups with a single fixed-pressure node and no compressors. We develop an unconstrained convex solver as well as an extension that handles compressors on non-overlapping cycles. Section~\ref{sec:cr} adopts a convex relaxation and puts forth an MI-QCQP to handle more general GF setups. The relaxation is provably exact for NGNs with a single fixed-pressure node and non-overlapping cycles. Nonetheless, numerical tests demonstrate that this MI-QCQP succeeds in finding the unique GF solution in NGNs with multiple fixed-pressure nodes and overlapping cycles as long as compressors are not on overlapping cycles. 

%%%%%%%%%%%%%%%%%%%%%%%%%%%%%%%%%%%%%%%%%%%%
\section{Energy Function Minimization}\label{sec:efm}
This section studies the GF task for the special case of $|\mcN_\psi|=1$. In an NGN without compressors, the GF task is posed as a convex minimization. The approach can be extended to networks having compressors, but not on cycles.

\subsection{Existing Constrained Energy Function-based GF Solver}\label{subsec:efm:existing}
Consider solving the GF task for a single fixed-pressure node (the reference node $r$) and in an NGN without compressors. This task boils down to solving equations \eqref{eq:mc2}--\eqref{eq:wey2}. As shown in~\cite{wolf2000energy}, the gas flows $\bphi$ for this GF setup can be found as the minimizer of the convex minimization
\begin{subequations}\label{eq:Nesterov}
		\begin{align}
		\min_{\bphi}~&~\sum_{\ell\in\mcP} \frac{a_\ell}{3} |\phi_\ell|^3 \label{eq:Nesterov:cost}\\
		\mathrm{s.to}~&~\bA^{\top} \bphi = \bq. \label{eq:Nesterov:con}
		\end{align}
\end{subequations}
This can be readily verified by the first-order optimality conditions of \eqref{eq:Nesterov}. In addition, the pressures $\bpsi$ can be recovered from the optimal Lagrange multipliers $\bxi\in\mathbb{R}^N$ associated with constraint \eqref{eq:Nesterov:con}: If $\bxi$ is shifted by a constant so that its $r$-th entry equals $\psi_r$, the remaining entries of this shifted $\bxi$ equal $\bpsi$. Problem \eqref{eq:Nesterov} can be reformulated as a second-order cone program or tackled via dual decomposition; see~\cite{SinghKekatosWF19}. 

\subsection{Novel Unconstrained Energy Function-based GF Solver}\label{subsec:efm:novel}
Rather than solving \eqref{eq:Nesterov} over $\bphi$, here we show that one can alternatively find the GF solution via an unconstrained convex minimization over $\bpsi$ as
\begin{equation}\label{eq:uGF}
\min_{\bpsi}~\frac{2}{3}\sum_{(m,n)\in\mcP}\frac{|\psi_m-\psi_n|^\frac{3}{2}}{\sqrt{a_{mn}}} - \bq^\top\bpsi.
\end{equation}
The convexity of this objective function follows from composition rules. Since this function is convex and differentiable, its unconstrained minimization is equivalent to nulling its gradient vector. Setting the $n$-th entry of this gradient to zero reveals that the minimizer $\bpsi^*$ of \eqref{eq:uGF} satisfies
\begin{equation}\label{eq:uGFgradient}
\sum_{\ell=(m,n)\in\mcP}\sign(\ba_\ell^\top \bpsi^*)\sqrt{\frac{|\ba_\ell^\top \bpsi^*|}{a_\ell}}=q_n
\end{equation}
where $\ba_\ell^\top$ is the $\ell$-th row of matrix $\bA$. Equation \eqref{eq:uGFgradient} is equivalent to eliminating the flows $\bphi$ from \eqref{eq:mc2} and \eqref{eq:wey2}. As with \eqref{eq:Nesterov}, the ambiguity in pressures could be handled by shifting $\bpsi^*$ by a constant, so that $\psi_r^*$ agrees with the given pressure at the reference node $r$. Once pressures $\bpsi^*$ have been determined, flows can be found using Lemma~\ref{le:fp}.

\begin{remark}\label{re:re1}
In the absence of compressors and when $|\mcN_\psi|=1$, the GF task becomes structurally similar to the water flow problem in water distribution networks without pumps~\cite{SinghKekatosWF19}. Therefore, the (un)-constrained energy function minimization approaches of \eqref{eq:Nesterov}--\eqref{eq:uGF} apply to the gas flow and water flow problems alike. For water networks, the decomposition technique of \cite{SinghKekatosWF19} extends \eqref{eq:Nesterov}--\eqref{eq:uGF} to water network setups with $|\mcN_\psi|=1$ and pumps, but pumps cannot lie on cycles. A similar technique can be used to solve the GF problem with compressors not on cycles and $|\mcN_\psi|=1$. The only modification needed relates to accounting for the multiplicative pressure law in gas compressors [cf.~\eqref{seq:compa}] vis-\`{a}-vis the additive pressure law of water pumps. Additionally, the decomposition algorithm may be extended to accommodate compressors on non-overlapping cycles using the flow-recovery procedure provided later as Algorithm~\ref{algo:1}. Since carrying over this decomposition technique from the water flow to the gas flow context is straight-forward and due to space limitations, it is not presented here.
\end{remark}

To handle GF setups with $|\mcN_\psi|>1$ and/or NGNs with compressors in loops, a convex relaxation of the Weymouth equation is pursued in the next section.

\section{MI-QCQP Relaxation}\label{sec:cr}
The minimization approaches of \eqref{eq:Nesterov}--\eqref{eq:uGF} provide computationally efficient methods to solve the GF problem, but exhibit three limitations: \emph{i)} they cannot handle multiple fixed-pressure nodes ($|\mcN_\psi|>1$); \emph{ii)} cannot handle compressors on cycles; and \emph{iii)} cannot be extended to optimal gas flow formulations (e.g., along the lines of \cite{singh2018optimal}). To overcome these limitations, this section presents an MI-QCQP-based solver that is applicable to any GF setup.

\subsection{Problem Reformulation}\label{subsec:cr:problem}
The non-convexity of \eqref{eq:G1} is due to the Weymouth equation in \eqref{seq:wey2a}. The piecewise quadratic equalities can be relaxed to convex inequality constraints: The pressure drop along a lossy pipe $\ell=(m,n)\in\bmcP_a$ is relaxed to
\begin{itemize}
	\item $\psi_m-\psi_n\geq a_\ell\phi_{\ell}^2$ for $\phi_{\ell}\geq 0$; or
	\item $\psi_n-\psi_m\geq a_\ell\phi_{\ell}^2$ for $\phi_{\ell}\leq 0$.
\end{itemize}  
The two cases can be differentiated using a binary variable $x_\ell$ capturing the direction of flow $\phi_{\ell}$. The relaxed pressure drop equations can be compactly written as
\[(2x_\ell-1)\psi_m+(1-2x_\ell)\psi_n\geq a_\ell\phi_{\ell}^2\]
where $x_\ell=1$ corresponds to $\phi_{\ell}\geq 0$; and $x_\ell=0$ to $\phi_{\ell}\leq0$. Despite the relaxation, the bilinear terms $x_\ell\psi_m$ make the aforementioned constraint non-convex. 

The McCormick linearization, popular for approximating multilinear terms by their linear convex envelopes, can be used to handle these bilinear terms~\cite{McCormick1976}. For the special case of bilinear terms involving at least one binary term, the McCormick linearization becomes exact. In fact, it is related to the so termed big-$M$ trick, but instead of using a single arbitrarily large value for $M$, it selects different values for $M$ that are specialized per product of variables, which could potentially reduce the running time of mixed-integer programming solvers. Let us briefly review the linearization. Consider the constraint $z_{\ell n}=x_\ell\psi_n$, for which $x_\ell\in\{0,1\}$ and $\psi_n\in[\underline{\psi}_m,\overline{\psi}_n]$. This constraint can be equivalently expressed via four linear inequalities
\begin{subequations}\label{eq:MC}
	\begin{align}
	x_{\ell} \underline{\psi}_n&\leq z_{\ell n}\leq x_{\ell}\overline{\psi}_n\label{seq:MCa}\\
	\psi_n+(x_{\ell}-1)\overline{\psi}_n&\leq z_{\ell n}\leq\psi_n+(x_\ell-1)\underline{\psi}_n.\label{seq:MCb}
	\end{align}
\end{subequations}
To verify the exactness, observe that when $x_\ell=1$, constraint \eqref{seq:MCb} yields $z_{\ell n}=\psi_n$ and \eqref{seq:MCa} holds trivially. When $x_\ell=0$, constraint \eqref{seq:MCa} enforces $z_{\ell n}=0$ and \eqref{seq:MCb} holds trivially. Hence, the constraints in \eqref{eq:MC} ensure that $z_{\ell n}=x_\ell\psi_n$.

To arrive at an MI-QCQP relaxation of \eqref{eq:G1}, for all lossy pipes $\ell\in\bmcP_a$, the pressure drop constraint of \eqref{seq:wey2a} is replaced by \eqref{eq:MC} and
\begin{subequations}\label{eq:weyMC}
	\begin{align}
	&2z_{\ell m}-2z_{\ell n}+\psi_n-\psi_m\geq a_\ell\phi_{\ell}^2,\label{seq:weyMCa}\\
	&-\overline{\phi}_\ell(1-x_\ell)\leq \phi_\ell\leq \overline{\phi}_\ell x_\ell,\label{seq:weyMCb}
	\end{align}
\end{subequations}
where $\overline{\phi}_{\ell}$ is an upper bound on $|\phi_{\ell}|$. Constraint \eqref{seq:weyMCa} represents the relaxed Weymouth equation, and constraint \eqref{seq:weyMCb} defines $x_\ell=\sign(\phi_\ell)$. Similar relaxations have been previously used in~\cite{backhaus2016convex}, \cite{bent2016hicss}, \cite{Singh18ACC}; see Section~\ref{sec:Comparison} for a detailed comparison.

When solving the GF problem with the Weymouth equations relaxed, the obtained solution is useful only if the relaxation is \emph{exact}, that is when \eqref{seq:weyMCa} holds with equality for all $\ell$. To render the relaxation provably exact, we convert the feasibility problem \eqref{eq:G1} to the MI-QCQP minimization 
\begin{align*}\label{eq:G2}
\min~&~r(\boldsymbol{\psi})\tag{G2}\\
\mathrm{over}~& ~ \bq, \bphi,\boldsymbol{\psi},\bx\\
\mathrm{s.to}~&~ \eqref{eq:mc2},\eqref{eq:comp},\eqref{eq:MC}, \eqref{eq:weyMC}.
\end{align*}
The optimization variable $\bx$ stacks $\{x_\ell\}_{\ell\in\bmcP_{a}}$, and the objective function is judiciously selected as
\begin{equation*}%\label{eq:gh}
r(\boldsymbol{\psi}):=\sum_{\substack{(m,n)\in \bmcP_{a}\\ (m,n)\notin \mcS_\mcC^a}}|\psi_m-\psi_n|
\end{equation*}
where $\mcS_\mcC^a$ is the set of cycles with compressors. These cycles will be also termed as \emph{active cycles}. The cost $r(\boldsymbol{\psi})$ sums up the absolute pressure differences across all lossy pipes not in active cycles. Despite the non-convexity of \eqref{eq:G2} due to the binary variables, this minimization can be handled for moderately sized networks thanks to the advancements in mixed-integer second-order cone solvers. The computational performance of \eqref{eq:G2} is further corroborated by our tests. The next section provides network conditions under which the exactness of \eqref{eq:G2} can be guaranteed analytically. The tests in Section~\ref{sec:tests} demonstrate numerically that solving \eqref{eq:G2} renders the relaxation exact for a much broader class of networks.
%~\cite{vishnu2010misocp}

For solving tasks such as \eqref{eq:G1}, NR-based or fixed-point iteration solvers are often preferred as opposed to optimization-based solvers due to computational superiority. However, in addition to guaranteeing convergence irrespective of initialization, problem \eqref{eq:G2} can also be used as follows:
\begin{itemize}
	\item \emph{Infeasibility:} As a relaxation of \eqref{eq:G1}, \eqref{eq:G2} can be used to screen infeasible GF instances; see Section~\ref{sec:tests} for tests. Such screening is of practical use as suggested in \cite{Ding2019Gasletter}.
	\item \emph{Initialization:} Problem \eqref{eq:G2} could be terminated before reaching optimality to yield initializations for NR solvers, hence combining the benefits of both approaches.
	\item \emph{Optimal gas flow:} The cost of \eqref{eq:G2} could be useful as a penalty term that can be added to optimization problems~\cite{SinghKekatosWF19}, \cite{singh2018optimal}. However, guaranteeing exact relaxation for such problems would need further analysis.
\end{itemize} 

\subsection{Exactness of the Relaxation}\label{subsec:cr:exact}
The relaxation in \eqref{eq:G2} will be analytically shown to be exact under the following network conditions.

\begin{condition}\label{con:C1}
The GF setup has a single fixed-pressure node, that is $|\mcN_\psi|=1$.
\end{condition}

\begin{condition}\label{con:C2}
Each edge of the NGN belongs to at most one cycle.
\end{condition}

\begin{condition}\label{con:C3}
The NGN does not exhibit circulation of gas, that is $\bn^c\odot\bphi\not>\bzero$ and $\bn^c\odot\bphi\not<\bzero$ for every cycle $\mcC$.
\end{condition}

Under Condition~\ref{con:C1}, the nodal injections are fixed \emph{a priori} and the GF task aims at finding the associated $(\bpsi,\bphi)$. Albeit Definition~\ref{de:GF} considered the GF task with multiple fixed-pressure nodes, the setup of a single fixed-pressure node is commonly met; see~\cite{misra2015GP}, \cite{abhi2017GM}, \cite{Singh18ACC}, \cite{abmann2019robust}. Regarding Condition~\ref{con:C2}, although it may seem restrictive at the outset, it is satisfied by several practical gas networks~\cite{Benner2018gasbenchmark}. For Condition~\ref{con:C3}, a circulation occurs when gas flows around a cycle along the same direction. It is easy to verify that gas cannot circulate in a cycle without compressors, since the incurred pressure drops along the cycle will all be in the same direction and thus cannot sum up to zero. In cycles with compressors, gas circulation can occur though it would cause an undesirable loss of energy. However, the tests of Section~\ref{sec:tests} demonstrate that the relaxation in \eqref{eq:G2} is exact even in setups where the sufficient Conditions~\ref{con:C1}--\ref{con:C3} are all violated.

The next exactness claim applies to the GF setup with known injections. From Lemma~\ref{le:fp}, we know that solving the GF task is equivalent to finding the correct flows $\bphi$. The next result provides conditions under which \eqref{eq:G2} yields flows $\bphi$ with partially correct entries. An algorithm to retrieve the entire $\bphi$ and thus eventually solve \eqref{eq:G1} is presented afterwards.

\begin{theorem}\label{th:gf1exact}
Let $\bphi$ be the unique flow vector solving \eqref{eq:G1}, and $\bphi'$ the flow vector minimizing \eqref{eq:G2}. Under Cond.~\ref{con:C1}--\ref{con:C3}, it holds $\phi_\ell'=\phi_\ell$ for all edges $\ell$ not belonging to active cycles.
 \end{theorem}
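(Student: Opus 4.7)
The plan is to write $\bphi' - \bphi$ in the cycle space of $\mcG$, isolate the components along passive cycles, and use the cost $r(\bpsi)$ to rule out any nonzero contribution.

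\emph{Cycle decomposition.} Under Cond.~\ref{con:C1} the injection vector $\bq$ is fully specified: for $n\neq r$ it is prescribed, and $q_r$ is fixed by the conservation law $\bone^\top\bq=0$. Hence both the true and relaxed triplets satisfy $\bA^\top\bphi = \bA^\top\bphi' = \bq$, placing $\bphi'-\bphi$ in $\nullspace(\bA^\top)$, i.e., in the cycle space of $\mcG$. Cond.~\ref{con:C2} makes the cycles edge-disjoint, so their indicator vectors $\{\bn^\mcC\}$ form a basis with pairwise disjoint supports and $\bphi'-\bphi = \sum_\mcC \lambda_\mcC \bn^\mcC$. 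This already delivers $\phi'_\ell = \phi_\ell$ on every edge outside a cycle; only the coefficients $\lambda_\mcC$ of the \emph{passive} cycles must be shown to vanish.

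\emph{Sandwiching $r(\bpsi')$.} Upper bound: the true triplet with $x_\ell = \mathds{1}[\phi_\ell>0]$ is feasible for \eqref{eq:G2} since Weymouth becomes tight on every lossy pipe and all McCormick identities hold, so optimality gives $r(\bpsi') \leq \sum_{\ell \in S} a_\ell \phi_\ell^2$, where $S$ denotes the lossy pipes not on any active cycle. Matching lower bound: for each passive cycle $\mcC$ the telescoping identity $\sum_{\ell \in \mcC} n^\mcC_\ell(\psi'_m-\psi'_n)=0$ holds automatically, while the relaxation together with the sign link~\eqref{seq:weyMCb} forces $|\psi'_m-\psi'_n| \geq a_\ell (\phi'_\ell)^2$ and $\sign(\psi'_m-\psi'_n) = \sign(\phi'_\ell)$ whenever $\phi'_\ell \neq 0$. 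Splitting $\mcC$ into $\mcC^\pm := \{\ell \in \mcC : n^\mcC_\ell\sign(\phi'_\ell)=\pm 1\}$, the telescoping identity reads $\sum_{\mcC^+}|\psi'_m-\psi'_n| = \sum_{\mcC^-}|\psi'_m-\psi'_n|$, and combining with the pointwise bounds yields
\[
\sum_{\ell \in \mcC}|\psi'_m - \psi'_n| \;\geq\; 2\max\!\big(S^+_\mcC,\, S^-_\mcC\big), \qquad S^\pm_\mcC := \sum_{\ell \in \mcC^\pm} a_\ell(\phi'_\ell)^2.
\]
Cond.~\ref{con:C3} keeps both $\mcC^\pm$ nonempty, since a relaxed circulation would render the telescoping identity infeasible against strictly positive lower bounds.

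\emph{Strict monotonicity at $\lambda_\mcC = 0$.} Plugging the true flows $\bphi|_\mcC$ makes Weymouth tight, and telescoping then forces $S^+_\mcC(0)=S^-_\mcC(0) = \tfrac12\sum_{\ell\in\mcC} a_\ell \phi_\ell^2$, so the cycle lower bound meets the true-solution contribution $\sum_{\ell\in\mcC}a_\ell\phi_\ell^2$ exactly. Substituting $\phi'_\ell = \phi_\ell + \lambda_\mcC n^\mcC_\ell$ and using $n^\mcC_\ell\phi_\ell = \pm|\phi_\ell|$ on $\mcC^\pm$, for $|\lambda_\mcC|$ small enough that no flow flips sign one obtains
\[
S^\pm_\mcC(\lambda_\mcC) - S^\pm_\mcC(0) \;=\; \pm 2\lambda_\mcC\!\sum_{\ell\in\mcC^\pm}\!a_\ell|\phi_\ell| \;+\; \lambda_\mcC^2\!\sum_{\ell\in\mcC^\pm}\!a_\ell,
\]
whose coefficients are strictly positive. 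Thus $\lambda_\mcC>0$ forces $S^+_\mcC > S^+_\mcC(0)$ and $\lambda_\mcC<0$ forces $S^-_\mcC > S^-_\mcC(0)$, so $\max(S^+_\mcC,S^-_\mcC) > S^+_\mcC(0)$ whenever $\lambda_\mcC \neq 0$. Summing this strict cycle lower bound over passive cycles and adding the trivial tree-edge contributions $\sum a_\ell\phi_\ell^2$ sandwiches $r(\bpsi')$ between two copies of $\sum_{\ell\in S}a_\ell\phi_\ell^2$, forcing $\lambda_\mcC=0$ for every passive $\mcC$.

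\emph{Main obstacle.} The clean algebra above assumes $|\lambda_\mcC|$ is small enough to preserve $\sign(\phi_\ell)$ on $\mcC$. Extending the strict inequality across the entire feasible range of $\lambda_\mcC$ requires tracking $\mcC^\pm$ piecewise as individual flows pass through zero. The saving grace is that growing $|\lambda_\mcC|$ drives $\bphi'|_\mcC$ toward a circulation in $\mcC$, and by the same telescoping-versus-bounds argument used to apply Cond.~\ref{con:C3} such a circulation is infeasible for the relaxation; this bounds the admissible interval of $\lambda_\mcC$ and continuity of $\max(S^+_\mcC,S^-_\mcC)$ across sign-flip boundaries completes the argument.
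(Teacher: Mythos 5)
Your proposal is correct, but it takes a genuinely different route from the paper's proof. The paper argues by contradiction via an explicit construction: assuming a passive-cycle flow is wrong, it builds a hybrid flow $\hbphi$ (true flows on that cycle and on all active cycles, relaxed flows elsewhere), constructs companion pressures $\hbpsi$ by a spanning-tree traversal --- this is where its auxiliary Lemmas~\ref{le:ob1} and \ref{le:ob2} enter, the latter consuming Condition~\ref{con:C3} to make the hybrid point feasible on active cycles --- and then shows $r(\hbpsi)<r(\bpsi')$, contradicting optimality of $\bpsi'$. You never construct a feasible point: you sandwich the optimal cost, using feasibility of the true solution for \eqref{eq:G2} to get $r(\bpsi')\le r(\bpsi)$, and a telescoping-plus-sign-consistency lower bound $\sum_{\ell\in\mcC}|\psi'_m-\psi'_n|\ge 2\max(S^+_\mcC,S^-_\mcC)$ on each passive cycle to force $r(\bpsi')>r(\bpsi)$ whenever some $\lambda_\mcC\neq0$. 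This buys two things: the paper's entire pressure-construction machinery (its Part II) disappears, and since your lower bound inspects only passive cycles --- where circulation is excluded automatically by the relaxation itself --- Condition~\ref{con:C3} is never actually used, so your argument establishes the theorem under Conditions~\ref{con:C1}--\ref{con:C2} alone; the paper needs Condition~\ref{con:C3} only to patch pressures on active cycles for its hybrid point.

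Your ``main obstacle,'' however, has a one-line resolution, and the completion you sketch (continuity across sign-flip boundaries) is not by itself sufficient, since continuity cannot propagate a strict inequality. Argue globally instead: $n^\mcC_\ell\phi'_\ell=n^\mcC_\ell\phi_\ell+\lambda_\mcC$ is increasing in $\lambda_\mcC$ for every edge, so for $\lambda_\mcC>0$ each edge of $\mcC^+(0)$ stays in $\mcC^+(\lambda_\mcC)$ and its term $a_\ell(n^\mcC_\ell\phi_\ell+\lambda_\mcC)^2$ strictly increases; hence $S^+_\mcC(\lambda_\mcC)>S^+_\mcC(0)=\tfrac12\sum_{\ell\in\mcC}a_\ell\phi_\ell^2$ for all feasible $\lambda_\mcC>0$, with no smallness restriction and no piecewise tracking (symmetrically $S^-_\mcC$ for $\lambda_\mcC<0$). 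The only case needing separate treatment is when all true flows on $\mcC$ vanish, so that $\mcC^\pm(0)=\emptyset$: then any $\lambda_\mcC\neq0$ turns $\bphi'$ into a circulation on a compressor-free cycle, which is infeasible for \eqref{eq:G2} because \eqref{seq:weyMCb} forces the binaries to track the flow signs and \eqref{seq:weyMCa} then makes every oriented pressure drop strictly positive, contradicting the telescoping identity. Finally, two minor slips to repair in the write-up: your identity $\sum_{\mcC^+}|\psi'_m-\psi'_n|=\sum_{\mcC^-}|\psi'_m-\psi'_n|$ ignores edges with $\phi'_\ell=0$, whose relaxed drops need not vanish (the bound $\ge 2\max(S^+_\mcC,S^-_\mcC)$ still survives, via the triangle inequality applied to those terms); and nonemptiness of $\mcC^\pm(0)$ follows from exactness of the Weymouth equation on the true solution, not from Condition~\ref{con:C3} as you state.
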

 
Theorem~\ref{th:gf1exact} establishes that the only possible mismatches between $\bphi'$ and $\bphi$ occur only at the edges lying on cycles with compressors. Then, if there are no cycles with compressors, the GF problem is solved correctly; see also \cite[Th. 2]{Singh18ACC}.
 
\begin{corollary}\label{co:gfexact}
Under Cond.~\ref{con:C1}--\ref{con:C2}, for an NGN without compressors in cycles, the minimizer of \eqref{eq:G2} solves \eqref{eq:G1} as well.
\end{corollary}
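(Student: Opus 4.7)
The plan is to derive Corollary~\ref{co:gfexact} directly from Theorem~\ref{th:gf1exact} by showing that, under the hypothesis ``no compressors in cycles,'' the extra assumption Cond.~\ref{con:C3} of Theorem~\ref{th:gf1exact} is actually automatic, and that the set of edges excluded by Theorem~\ref{th:gf1exact} becomes empty. So the whole argument reduces to checking two bookkeeping facts and then invoking Theorem~\ref{th:gf1exact} (plus Lemma~\ref{le:fp} to recover pressures from flows).

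First I would observe that, by definition, the set of active cycles $\mcS_\mcC^a$ consists exactly of cycles containing at least one edge from $\mcP_a$. Hence the hypothesis ``no compressors in cycles'' is equivalent to $\mcS_\mcC^a=\emptyset$. Consequently, every edge $\ell\in\mcP$ is trivially ``not belonging to any active cycle'' in the sense used by Theorem~\ref{th:gf1exact}.

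Second I would verify that Cond.~\ref{con:C3} holds as a free consequence. For any cycle $\mcC$, the hypothesis guarantees $\mcC\cap\mcP_a=\emptyset$, so every edge of $\mcC$ obeys the Weymouth law \eqref{eq:wey2}. If gas were to circulate, i.e.\ $\bn^\mcC\odot\bphi>\bzero$ or $\bn^\mcC\odot\bphi<\bzero$, then on each edge the signed pressure drop $\psi_m-\psi_n=a_\ell\sign(\phi_\ell)\phi_\ell^2$ would carry the same sign along the oriented cycle; summing these strictly-signed drops around the cycle would give a nonzero total, contradicting the telescoping identity $\sum_{\ell\in\mcC} n^\mcC_\ell(\psi_{m_\ell}-\psi_{n_\ell})=0$. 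Hence Cond.~\ref{con:C3} is automatic.

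With Cond.~\ref{con:C1}, Cond.~\ref{con:C2}, and the now-verified Cond.~\ref{con:C3} in place, Theorem~\ref{th:gf1exact} applies and yields $\phi_\ell'=\phi_\ell$ for every edge not in an active cycle; since there are no active cycles, this holds for \emph{all} $\ell\in\mcP$, i.e.\ $\bphi'=\bphi$. Finally, under Cond.~\ref{con:C1} the reference pressure $\psi_r$ at the unique node $r\in\mcN_\psi$ is fixed, so Lemma~\ref{le:fp} turns the equality $\bphi'=\bphi$ into equality of the full triplet, showing that the minimizer of \eqref{eq:G2} solves \eqref{eq:G1}. I expect no real obstacle here, since the content is entirely combinatorial reduction; the only step that requires a moment's care is the automatic verification of Cond.~\ref{con:C3}, which must be argued cycle-by-cycle rather than globally.
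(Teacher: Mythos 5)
Your route is the paper's own: the paper gives no separate proof, presenting the corollary as immediate from Theorem~\ref{th:gf1exact} once one notes that ``no compressors in cycles'' means $\mcS_\mcC^a=\emptyset$, so that every edge is outside any active cycle; and your check that Condition~\ref{con:C3} comes for free in compressor-free cycles reproduces the paper's own remark following that condition (strictly signed pressure drops around an oriented cycle cannot telescope to zero). Up to the conclusion $\bphi'=\bphi$ on all edges, your argument is correct and, if anything, more explicit than the paper's.

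Your final step, however, is circular. Lemma~\ref{le:fp} applies to a pair satisfying the \emph{exact} equations \eqref{eq:wey2}--\eqref{eq:comp}, whereas the minimizer's pressures $\bpsi'$ are only known to satisfy the relaxed constraints \eqref{eq:MC}--\eqref{eq:weyMC}; whether they satisfy \eqref{eq:wey2} is precisely the exactness being claimed, so invoking Lemma~\ref{le:fp} on $(\bphi',\bpsi')$ presupposes the conclusion. As written, your argument would equally ``prove'' that \emph{any} feasible point of the constraints of \eqref{eq:G2} with correct flows solves \eqref{eq:G1}, which is false: for a single pipe $r\to n$ with $\phi_\ell>0$, the point $\psi_n'=\psi_r-a_\ell\phi_\ell^2-1$ is feasible with the correct flow yet violates the Weymouth equality. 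The missing ingredient is the objective, which is the entire mechanism of the relaxation. Since the true solution $(\bq,\bphi,\bpsi)$ is feasible for \eqref{eq:G2}, optimality gives $r(\bpsi')\leq r(\bpsi)=\sum_{\ell\in\bmcP_a} a_\ell\phi_\ell^2$ (with no active cycles, every lossy pipe is counted in $r$). Conversely, $\bphi'=\bphi$ together with \eqref{seq:weyMCa}--\eqref{seq:weyMCb} forces $|\psi_m'-\psi_n'|\geq (2x_\ell-1)(\psi_m'-\psi_n')\geq a_\ell(\phi_\ell')^2=a_\ell\phi_\ell^2$ on every lossy pipe, hence $r(\bpsi')\geq\sum_{\ell\in\bmcP_a} a_\ell\phi_\ell^2$. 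The two bounds squeeze every inequality \eqref{seq:weyMCa} to equality, so $(\bphi',\bpsi')$ does satisfy \eqref{eq:wey2}; only at this point does Lemma~\ref{le:fp} yield $\bpsi'=\bpsi$ and complete the corollary.
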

 
Corollary~\ref{co:gfexact} identifies a setup where \eqref{eq:G2} is equivalent to solving \eqref{eq:G1}. Nonetheless, if there are no compressors in cycles, one would prefer tackling \eqref{eq:G1} using the solvers of Section~\ref{sec:efm}. This is because running the decomposition technique discussed in Remark~\ref{re:re1} and solving \eqref{eq:Nesterov}, are simpler than solving the MI-QCQP of \eqref{eq:G2}.
 
 %Despite the competitiveness of energy function minimization approach in solving the GF problem, it must be noted that the methods proposed in Section~\ref{sec:efm} may be used only if Condition~\ref{con:C1} is satisfied and there are no compressors on cycles.
 %The next claim highlights the advantage of \eqref{eq:G2} if Condition~\ref{con:C1} is not satisfied, i.e., the NGN has multiple nodes with fixed pressure and unknown injections.
% 
%  \begin{theorem}\label{th:gf2exact}
% 	Under
% \end{theorem}

\subsection{Recovering the GF Solution}\label{subsec:cr:recovery}
Returning to the general setup, we next provide a procedure to retrieve the solution $\bphi$ of \eqref{eq:G1} given a minimizer $\bphi'$ of \eqref{eq:G2}. From Theorem~\ref{th:gf1exact}, vector $\bphi'$ needs to be corrected only at the entries corresponding to edges in active cycles. To this end, we first put forth an algorithm to correct the flows within a single active cycle, and then delineate the steps to systematically correct the flows for all active cycles of the network. 

Consider an active cycle $\mcC$ with $N_\mcC$ nodes. Let $\psi_0$ be a known pressure on node $0\in\mcC$, and $\bphi_\mcC'$ be the $N_\mcC$-length subvector of $\bphi'$ collecting the flows on $\mcC$. Similarly, let $\bn_\mcC$ be the $N_\mcC$-length subvector of the indicator vector for cycle $\mcC$. The next lemma explains how $\bphi_\mcC$ can be recovered from $\bphi_\mcC'$.

\begin{lemma}\label{le:correction}
Given a known pressure $\psi_0$, and flows $\bphi_\mcC'$ on active cycle $\mcC$ obtained from \eqref{eq:G2}, Algorithm~\ref{algo:1} determines the corrected gas flows $\bphi_\mcC$ such that the relaxed Weymouth equations in \eqref{eq:weyMC} are satisfied with equality.
\end{lemma}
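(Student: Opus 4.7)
My plan is to reduce the flow correction on $\mcC$ to a single scalar parameter, express the Weymouth equality constraints as a scalar loop-closure equation in that parameter, and invoke Theorem~\ref{th:GFunique} to guarantee a solution. The first step is to observe that both $\bphi_\mcC'$ and any corrected cycle-flow vector $\bphi_\mcC$ must satisfy the same nodal mass-conservation relations, because only flows on cycle edges are being modified while nodal injections and non-cycle flows remain fixed. Since $\mcC$ is a simple cycle, the space of flow perturbations supported on its edges that preserve mass conservation is one-dimensional and spanned by $\bn_\mcC$. Consequently there exists a single scalar $\delta$ with $\bphi_\mcC=\bphi_\mcC'+\delta\,\bn_\mcC$, and the correction task reduces to pinning down $\delta$.

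Next, I would fix $\delta$ via a loop-closure condition. Starting from node $0$ with pressure $\psi_0$ and traversing $\mcC$ along the cycle direction, each edge prescribes an exact pressure update: for a pipe $\ell\in\bmcP_a\cap\mcC$, the tight form of \eqref{seq:weyMCa} contributes $\pm a_\ell\,\sign(\phi_\ell)\,\phi_\ell^2$ with $\phi_\ell=\phi_\ell'+\delta\,n^\mcC_\ell$; for a compressor $\ell\in\mcP_a\cap\mcC$, the multiplicative law \eqref{seq:compa} scales the current pressure by $\alpha_\ell^{\pm 1}$. Composing these updates around $\mcC$ produces a scalar function $g(\delta)$ giving the pressure returned to node $0$, and single-valuedness of pressure imposes the closure condition $g(\delta)=\psi_0$. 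Algorithm~\ref{algo:1} is essentially this symbolic pressure traversal followed by solving for a root $\delta^*$; the corrected cycle flows $\bphi_\mcC'+\delta^*\bn_\mcC$ then satisfy \eqref{seq:weyMCa} with equality by construction.

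Existence of a valid $\delta^*$ follows from the assumed existence and uniqueness (Theorem~\ref{th:GFunique}) of the true GF flow vector $\bphi$: its restriction to $\mcC$ differs from $\bphi_\mcC'$ by a circulation, by the nullspace argument above, and by definition satisfies every pressure equality along $\mcC$, hence is a root of $g$. The main obstacle I anticipate is verifying that the algorithm's symbolic construction produces a well-posed scalar equation and picks the correct root. This is delicate because $g$ is only piecewise smooth---the signs of $\phi_\ell'+\delta\,n^\mcC_\ell$ may flip as $\delta$ varies---and because any compressor on $\mcC$ multiplies all subsequent pressure contributions by $\alpha_\ell^{\pm 1}$, coupling the equation across edges. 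I expect this to be handled by leveraging monotonicity of $g$ within each sign-pattern region together with the sign commitments $\{x_\ell\}_{\ell\in\bmcP_a\cap\mcC}$ already fixed by the MI-QCQP \eqref{eq:G2}, so that the algorithm is constrained to the branch containing the true GF circulation and converges to it.
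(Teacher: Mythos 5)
Your overall architecture matches the paper's proof: you reduce the correction to a single scalar via the nullspace argument ($\bphi_\mcC=\bphi_\mcC'+\delta\bn_\mcC$, valid because cycles do not overlap), you set up the loop-closure function $g(\delta)$ by composing signed Weymouth updates and compressor scalings around $\mcC$, and you get existence of a root from the fact that the true GF flows close the loop. Up to that point you are on the paper's track.

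The gap is in the last step, where you leave root selection and convergence unresolved and propose a fix that does not work. You claim $g$ is only \emph{piecewise} monotone (across sign-pattern regions of $\phi'_\ell+\delta n^\mcC_\ell$) and suggest constraining the search to the branch determined by the binaries $\{x_\ell\}$ fixed by \eqref{eq:G2}. This fails for two reasons. First, those binaries encode the signs of the \emph{uncorrected} flows $\bphi_\mcC'$; the corrected flows $\bphi_\mcC$ may reverse the direction of pipe flows on the cycle (only compressor edges are sign-pinned by \eqref{seq:compb}), so the true root $\delta^\ast$ can lie outside the branch you would restrict to --- and indeed Algorithm~\ref{algo:1} never uses the binaries, it traverses with the exact signed equation \eqref{seq:wey2a}. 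Second, piecewise monotonicity alone does not give a unique root nor justify bisection. The missing idea --- which is the crux of the paper's proof --- is that $g$ is \emph{globally} strictly monotone: the directional flow on each cycle edge is $n^\mcC_\ell\phi'_\ell+\delta$, increasing in $\delta$; the Weymouth drop map $\phi\mapsto a_\ell\sign(\phi)\phi^2$ is monotonically increasing on all of $\mathbb{R}$, with no breakdown at sign changes; and compressors multiply pressure by positive constants, preserving order. Composing these around the cycle (the same argument as in the proof of Lemma~\ref{le:1path}) gives: $\lambda>\lambda_\mcC$ implies $\hat{\psi}_0(\lambda)<\psi_0$ and $\lambda<\lambda_\mcC$ implies $\hat{\psi}_0(\lambda)>\psi_0$. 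Hence the root is unique and bisection provably converges to it, with no branch bookkeeping needed. Without this global monotonicity statement your proof does not establish that the algorithm returns the correct $\delta^\ast$.
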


\begin{proof}
Because $\bphi$ and $\bphi'$ both satisfy \eqref{eq:mc2}, it follows that $(\bphi-\bphi')\in\nullspace(\bA^\top)$. Since there are no overlapping cycles, we have that $\bphi_\mcC=\bphi_\mcC'+\lambda_\mcC\bn_\mcC$ for some $\lambda_\mcC\in\mathbb{R}$. To recover $\bphi_\mcC$, we next provide a method for finding $\lambda_\mcC$. 

Suppose one is given a $\lambda\in\mathbb{R}$. Given pressure $\psi_0$ and the candidate flow vector $\bphi_\mcC'+\lambda\bn_\mcC$, one can calculate the pressures along $\mcC$ sequentially using \eqref{seq:wey2a} and \eqref{seq:compa}. Upon completing the cycle, the pressure at node $0\in\mcC$ will be evaluated to the value of $\hat{\psi}_0(\lambda)$. The value $\hat{\psi}_0(\lambda)$ may not be equal to $\psi_0$. Note that for $\lambda>\lambda_\mcC$, it holds that
\[\sign(\bphi_\mcC'-\bphi_\mcC +\lambda\bn_\mcC)\odot\bn_\mcC=\sign\left((\lambda-\lambda_\mcC\right)\bn_\mcC)\odot\bn_\mcC>\bzero.\]
Using the above along with the argument used in the proof of Lemma~\ref{le:1path}, it can be shown that $\hat{\psi}_{0}(\lambda)<\psi_{0}$. In a similar fashion, if $\lambda<\lambda_\mcC$, then $\hat{\psi}_{0}(\lambda)>\psi_{0}$. Therefore, the function $\hat{\psi}_{0}(\lambda)-\psi_{0}$ is monotonic in $\lambda$, and $\hat{\psi}_{0}(\lambda)=\psi_{0}$ if and only if $\lambda=\lambda_\mcC$. Thanks to this monotonicity, one can find $\lambda_\mcC$ iteratively using bisection, tabulated as Algorithm~\ref{algo:1}.
\end{proof}

Lemma~\ref{le:correction} shows that $\bphi_\mcC$ can be recovered from $\psi_0$ and $\bphi_\mcC'$ using a bisection technique on $\lambda$. The limits for the search space $[\underline{\lambda}, \overline{\lambda}]$ of $\lambda$ can be found using engineering constraints on gas flows. In fact, these limits can be tightened since the entries of $\bphi_\mcC'$ and $\bphi_\mcC(\lambda)=\bphi_\mcC'+\lambda \bn_\mcC$ corresponding to any compressor in $\mcC$ must have the same sign due to \eqref{seq:compb}. 

We next provide the steps to find the correct GF solution using the flow $\bphi'$ obtained from \eqref{eq:G2}:

\emph{T1)} Select a spanning tree $\mcT$ of the NGN graph $\mcG$ rooted at the reference node $r$.

\emph{T2)} Starting from node $r$, traverse $\mcT$ via a depth-first search. 

\emph{T3)} If a node $n$ does not belong to an active cycle of $\mcG$, calculate its pressure as follows: If the edge connecting node $n$ to its parent node in $\mcT$ is a lossy pipe, use \eqref{seq:wey2a}; else, if this edge is a compressor, use \eqref{seq:compa}.

\emph{T3)} If a node $n$ belongs to an active cycle $\mcC$, check if the flows in cycle $\mcC$ have been corrected. If the flows are already corrected or if $i$ is the first node in $\mcC$ that is encountered, compute the nodal pressure as in step \emph{T3)}. Else, pass the pressure at the parent node of $i$ (which is also in $\mcC$) along with the non-corrected flow subvector $\bphi_\mcC'$ to Algorithm~\ref{algo:1} and obtain the corrected flows on $\mcC$.

\emph{T4)} Continue until all nodes in $\mcT$ have been traversed. 

\begin{algorithm}[t]
	\caption{Recover flows on active cycles upon solving \eqref{eq:G2}}\label{algo:1}
	\SetAlgoLined
	\SetKwInOut{Input}{Input}
	\SetKwInOut{Output}{Output}
	\SetKwInOut{Return}{Return}
	\SetKwInOut{Initialize}{Initialize}
	\Input{$\psi_0,\bphi_\mcC',\bn_\mcC,\underline{\lambda}, \overline{\lambda}$; tolerance $\epsilon$; pipe and compressor parameters along $\mcC$}
	\Output{flow vector $\bphi_\mcC$ and pressure vector $\bpsi_\mcC$ along $\mcC$}
	\Initialize{Set $\lambda\leftarrow\frac{\underline{\lambda}+\overline{\lambda}}{2}$ and $\psi_0(\lambda)\leftarrow\infty$}
	\While{$|\psi_0(\lambda)-\psi_0|\geq\epsilon$}{
		Try flow vector $\bphi_\mcC'+\lambda\bn_\mcC$. Starting from node $0$, compute pressures $\psi_n(\lambda)$ along all nodes in $n\in\mcC$ using \eqref{seq:wey2a} and \eqref{seq:compa} until you return to node $0$. 
		
		\eIf{$\psi_0(\lambda)>\psi_0$}{
			Set $\underline{\lambda} \leftarrow \lambda,~\lambda\leftarrow\frac{\underline{\lambda}+\overline{\lambda}}{2}$
		}{
			Set $\overline{\lambda} \leftarrow \lambda,~\lambda\leftarrow\frac{\underline{\lambda}+\overline{\lambda}}{2}$
		}
	}
\Return{flows $\bphi_\mcC=\bphi_\mcC'+\lambda\bn_\mcC$ and pressures $\bpsi_\mcC(\lambda)$}
\end{algorithm}

%%%%%%%%%%%%%%%%%%%%%%%%%%%%%%%%%%%%%%%%%%%%%%%%%%%%%%%%%%%%%%%%%%%%%%%%%%%%%%%%%%%%%%%%%%%%%%%%%%%%%%%%

\section{Comparison to Prior Work}\label{sec:Comparison}
This work puts forth three novel components: \emph{c1)} proving the uniqueness of the GF problem solution under steady-state conditions; \emph{c2)} proposing GF solvers based on energy function minimization; and \emph{c3)} devising a provably exact MI-QCQP relaxation. These components are next contrasted to existing related works:

\emph{c1)~Uniqueness:} For an NGN with no compressors, the GF solution may be found as a minimizer of~\eqref{eq:Nesterov}; see~\cite{wolf2000energy},~\cite{Ding2019Gasletter}. A linearization technique has also been put forth to accelerate solving \eqref{eq:Nesterov}~\cite{Ding2019Gasletter}. References \cite{Vuffray2015monotonicity} and \cite{misra2015maximum} broaden the uniqueness claim for NGNs with \emph{additive} compressors of constant gain. These works formulate strictly convex problems that yield a GF solution; hence proving uniqueness by convexity. However, gas compressors are oftentimes \emph{multiplicative}, so that the previous uniqueness claims do not carry over. In our work~\cite{Singh18ACC}, uniqueness was proved for multiplicative compressors under any network topology, but with a single fixed-pressure node. Theorem~\ref{th:GFunique} generalizes all past claims for NGNs with multiplicative compressors, any topology, and an arbitrary number of fixed-pressure nodes. 

%The uniqueness of the GF solution with multiplicative compressors is considered in~\cite{Dvijotham15},~\cite{Singh18ACC}. Reference \cite{Dvijotham15} establishes uniqueness within a monotonicity domain that is hard to characterize in meshed NGNs

\emph{c2)~Energy Function Minimization:} Problem~\eqref{eq:Nesterov} dates back to~\cite{wolf2000energy}, and has since been used for solving the GF task; verifying the feasibility and uniqueness of a GF instance~\cite{Ding2019Gasletter}; and initializing optimization problems. However, its applicability was limited to NGNs without compressors. As explained in Remark~\ref{re:re1}, this work suggests using \eqref{eq:Nesterov}  to handle NGNs with compressors on non-overlapping cycles. We also present the unconstrained energy function formulation of~\eqref{eq:uGF}.

\emph{c3)~MI-QCQP relaxation of GF:} The key difficulty in solving (optimal) GF problems stems from the non-linear Weymouth equation. A disjunctive convex relaxation of this equation was found to be efficient in~\cite{backhaus2016convex},~\cite{bent2016hicss}. Numerous studies have thereon employed similar convex relaxations; see~\cite{bent2018market},~\cite{Schwele2019coordination},~\cite{chen2018gaselectric}. Unfortunately, it is hard to guarantee the exactness of these relaxations. An effective heuristic is to fix the binary variables involved to the values obtained by the convex relaxation and handle the resultant non-convex nonlinear program through a general solver~\cite{backhaus2016convex},~\cite{bent2018market}. A gas-electric flow problem was solved in~\cite{chen2018gaselectric}, wherein a cost function was proposed that was numerically found useful towards attaining exact relaxation. Unlike previous works, the MI-QCQP formulation of Section~\ref{sec:cr} provides theoretical guarantees for exact relaxation, while expanding the claims of~\cite{Singh18ACC}. The GF solver developed in \cite{Singh18ACC} was applicable to NGN's with compressors not on cycles. However, in this work, the cost function of \eqref{eq:G2} is meticulously designed to ensure that correct flows are obtained outside active cycles. Additionally, Algorithm~\ref{algo:1} is developed to enable flow correction on active cycles efficiently.  Although the GF problem is intrinsically simpler than the optimal gas (and possible electric) flow problem considered in prior works, this work lays a foundation towards analytical guarantees for exact relaxation. It has been recently shown that exact relaxation of network flow optimization problems may be guaranteed using a convex penalty~\cite{Garcia2019networkflow}. It is worth mentioning that a related MI-QCQP formulation of the water flow problem in \cite{SinghKekatosWF19}, can also provably yield an \emph{exact} convex relaxation for the optimal water flow task~\cite{singh2018optimal}.

%%%%%%%%%%%%%%%%%%%%%%%%%%%%%%%%%%%%%%%%%%%%%%%%%%%%%%%%%%%%%%%%%%%%%%%%%%
\begin{figure}[t]
	\centering
	\includegraphics[scale=0.35]{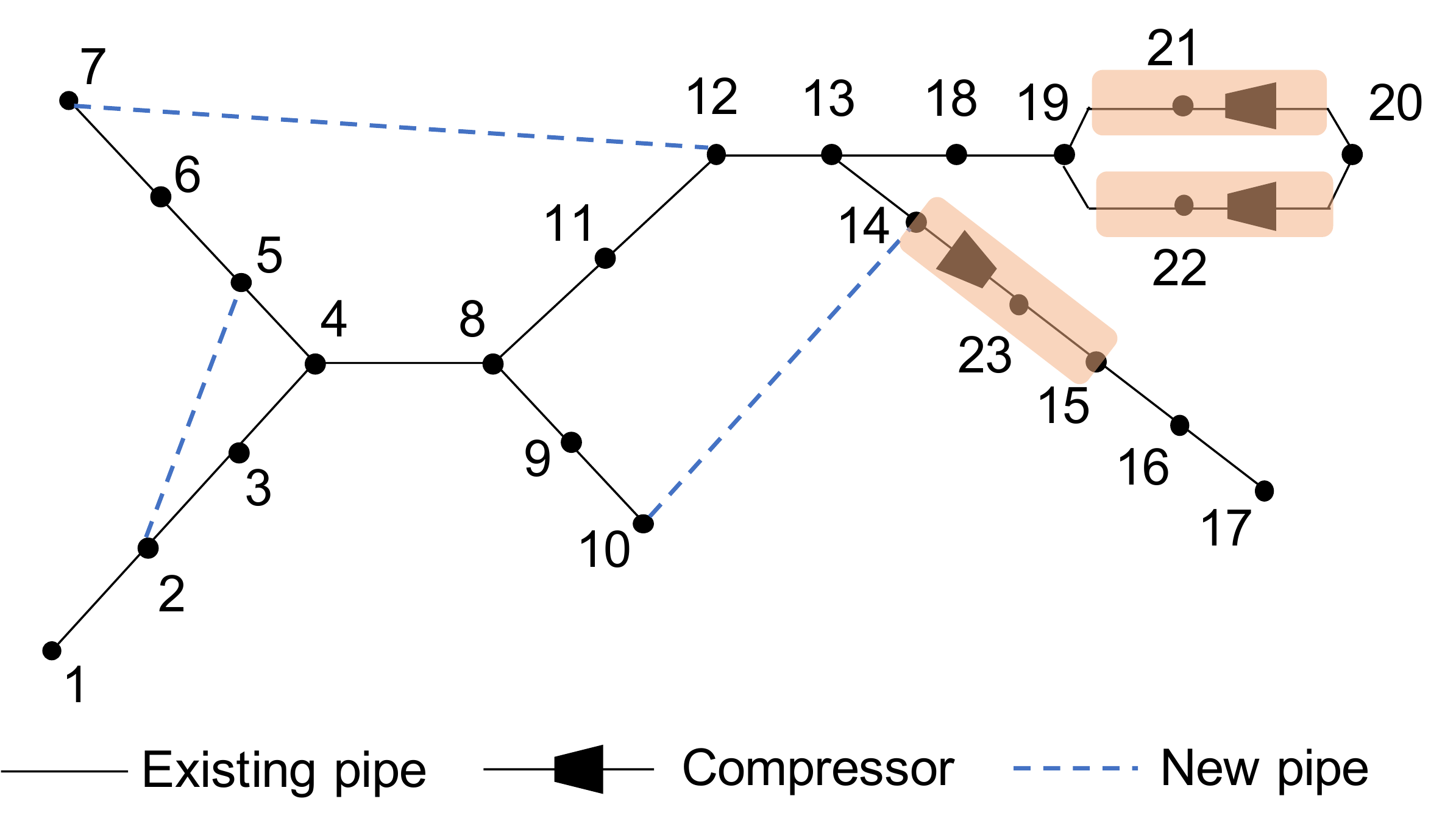}
	\includegraphics[scale=0.35]{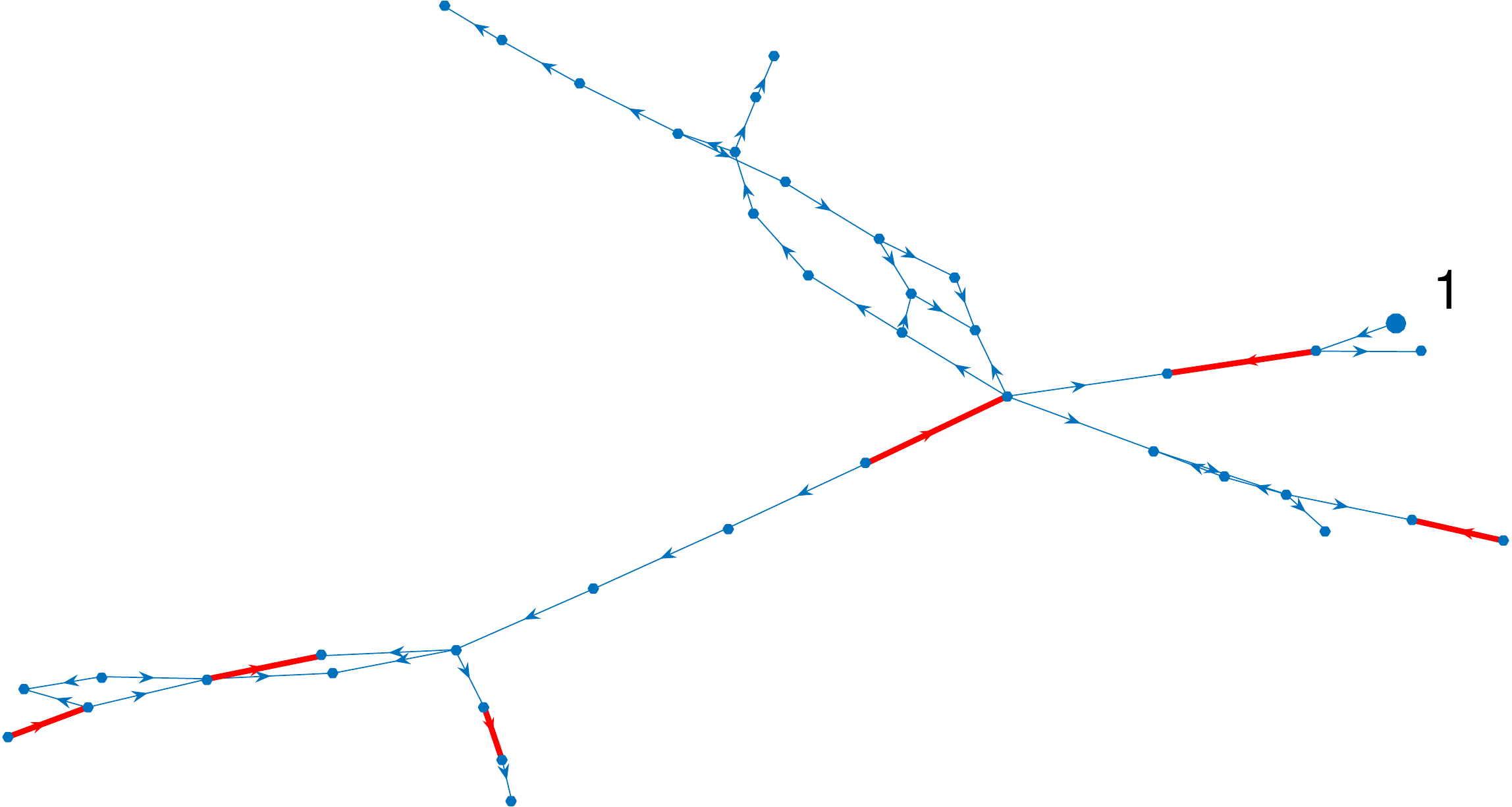}
	\caption{\emph{Top:}~Modified Belgian natural gas network. \emph{Bottom}: GasLib-40 network with red edges representing compressors.}
	\label{fig:belgian}
\end{figure}

\section{Numerical Tests}\label{sec:tests} 
The proposed GF solver based on the relaxed MI-QCQP \eqref{eq:G2} and Algorithm~\ref{algo:1} was tested on the modified Belgian benchmark NGN and the GasLib-40 NGN of Fig.~\ref{fig:belgian}. Starting with the Belgian NGN, the pipe coefficients and compressor ratios were derived based on the nodal pressures and edge flows reported in \cite{wolf2000energy}. The network contains three compressors, which are modeled as ideal compressors  followed by lossy pipes. Problem \eqref{eq:G2} was solved using the MATLAB-based optimization toolbox YALMIP using CPLEX as the MI-QCQP solver~\cite{yalmip},~\cite{cplex}. All tests were conducted on a 2.7 GHz Intel Core i5 computer with 8 GB RAM.

As a model validation step, we first tested the \eqref{eq:G2} solver on the original Belgian network, which is a tree, except for one cycle formed by parallel compressors, see~Fig.~\ref{fig:belgian}. The pressure at node~$1$ was treated as reference. The flow values obtained from \eqref{eq:G2} agreed with those of~\cite{wolf2000energy} for all edges except for the edges along the active cycle. Similarly, the pressures agreed for all nodes other than node~$20$. Therefore, the pressure at node~$19$ and the flows on edges $(19,21)$, $(19,22)$, $(20,21)$, $(20,22)$ were passed to Algorithm~\ref{algo:1} for correction. The final result was found to coincide with \cite{wolf2000energy}.

The Belgian network was subsequently augmented by additional pipelines; see Fig.~\ref{fig:belgian}. The resulting modified network has overlapping cycles, thus violating Condition~\ref{con:C2} required in Theorem~\ref{th:gf1exact}. To get reasonable friction coefficients, for every added line $(m,n)$, the coefficient $a_{mn}$ was set equal to the sum of $a_\ell's$ along the $m-n$ path, yielding $a_{2,5}=0.1936$, $a_{10,14}=0.0439$, $a_{7,12}=0.0419$. We kept the reference pressure at node $1$ and the compression ratios constant as in \cite{wolf2000energy}, and drew $1,500$ random gas injections $\bq$. To construct these samples, we perturbed the benchmark injections $\bq_0$ that lie in the range $[-15.61,22.01]$ by a standardized normal deviation. The injection at node $20$ was set to the negative sum of the remaining injections to get $\bone^\top\bq=0$ for all samples.

Using the modified meshed Belgian NGN of Fig.~\ref{fig:belgian} and the random gas injections, we tested the exactness of \eqref{eq:G2} and the performance of Algorithm~\ref{algo:1}. Not all of the random injections were feasible for the GF problem -- some violated \eqref{seq:compb} or \eqref{seq:wey2b}. Problem \eqref{eq:G2} was infeasible for $876$ out of the $1,500$ random instances. Since \eqref{eq:G2} is a relaxation of \eqref{eq:G1}, these instances are apparently infeasible for \eqref{eq:G1} too. The performance of \eqref{eq:G2} and Algorithm~\ref{algo:1} was tested on the remaining $624$ gas injection instances. To evaluate the success of \eqref{eq:G2} in solving \eqref{eq:G1}, we calculated the \emph{inexactness gap} $G$ defined as
\[G:=\max_{(m,n)\in\bmcP_a}\frac{|\psi_m-\psi_n|-a_{mn}\phi_{mn}^2}{a_{mn}\phi_{mn}^2}\geq 0\]
for the pressures and flows obtained by \eqref{eq:G2} and Algorithm~\ref{algo:1}.

\begin{figure}[t]
	\centering
	\includegraphics[scale=0.22]{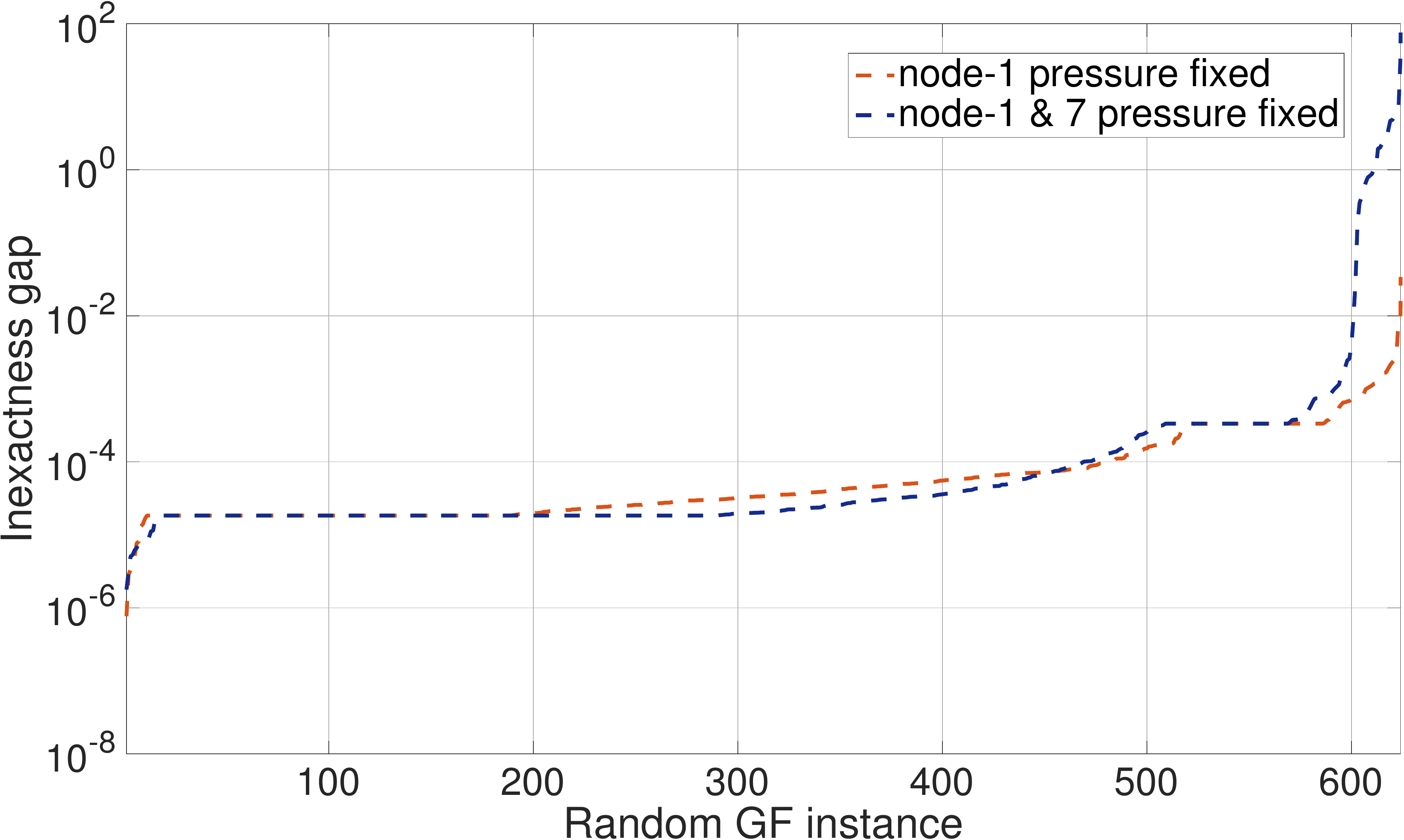}
	\caption{Inexactness gap attained by \eqref{eq:G2} followed by Algorithm~\ref{algo:1} over random feasible instances of the GF problem.}
	\label{fig:feas}
\end{figure}

\begin{figure}[t]
	\centering
	\includegraphics[scale=0.22]{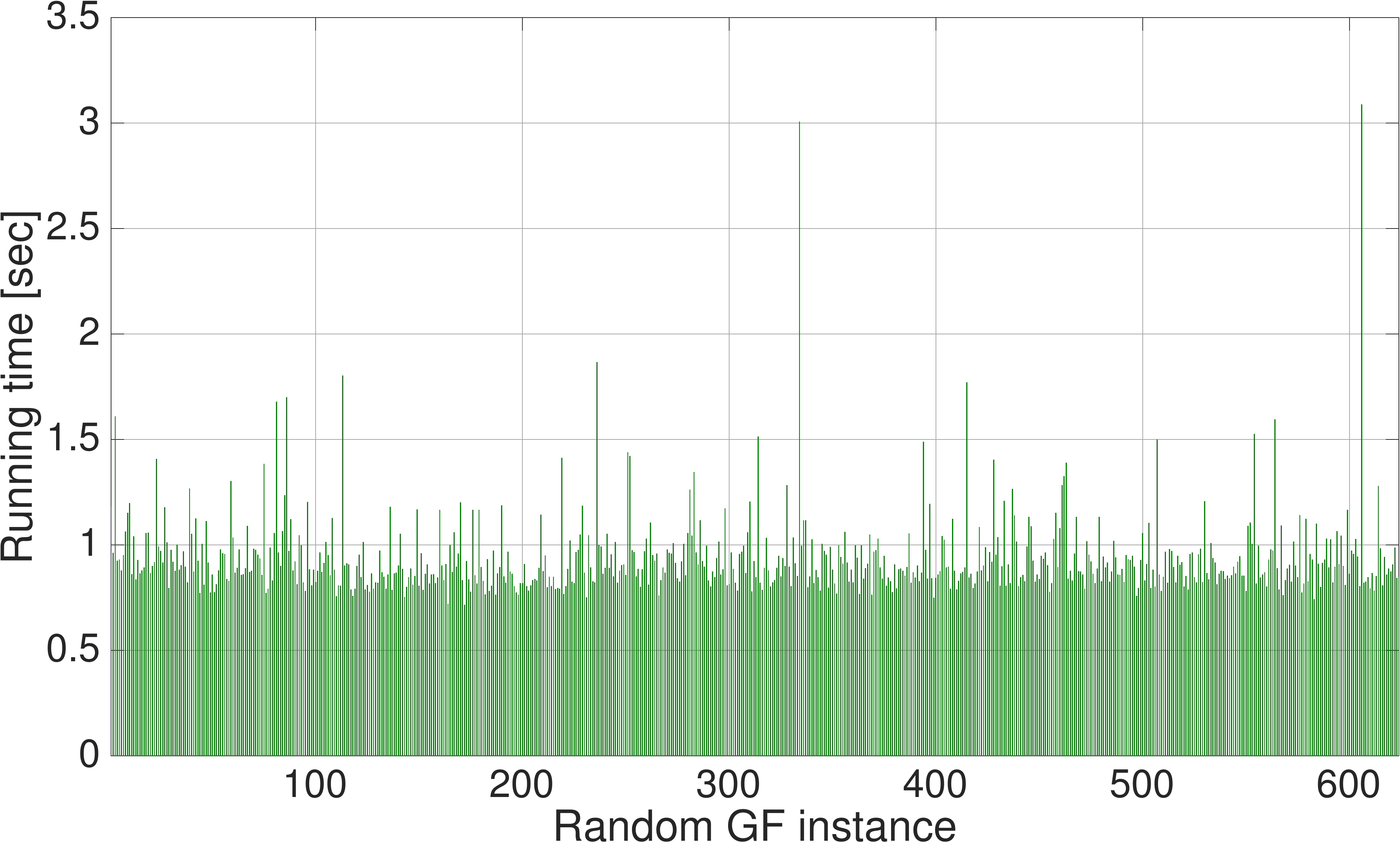}
	\caption{Running time for \eqref{eq:G2} and Alg.~\ref{algo:1} over random feasible GF instances.}
	\label{fig:time}
\end{figure}

The ranked inexactness gap for the feasible GF instances is shown by the first curve in Fig.~\ref{fig:feas}. The gap was less than $10^{-3}$ for more than $97\%$ of the feasible instances, while the maximum gap over all instances was $0.009$. This corroborates that the proposed solver performs well even when Condition~\ref{con:C2} is not met. Fig.~\ref{fig:time} shows the running time for solving \eqref{eq:G2} and Algorithm~\ref{algo:1} over the $624$ feasible instances. The average (median) running time was 0.96 sec (0.89 sec). 

Considering Condition~\ref{con:C1}, we used the fixed pressure at node $1$ and the pressures obtained at node $7$ for the feasible GF instances, we solved \eqref{eq:G2} again. Although the hypothesis of Th.~\ref{th:gf1exact} does not hold anymore, the inexactness gap was found to be less than $10^{-3}$ for more than $94\%$ of the instances; see the second curve in Fig.~\ref{fig:feas}. Thus, the tests reveal that the novel solver successfully finds the GF solution even when the sufficient Conditions~\ref{con:C1}--\ref{con:C2} are violated. However, Condition~\ref{con:C3} prohibiting gas circulations could not be violated for the Belgian NGN because the only active cycle in this NGN has parallel compressors, hence avoiding circulations from~\eqref{seq:compb}. We next deal with GF instances on the GasLib-40 network, wherein a circulation could potentially occur.

GasLib-40 roughly represents a part of the German gas transport network~\cite{schmidt2017gaslib}. The network exhibits 40 nodes, 39 pipes, and 6 compressors; see Fig.~\ref{fig:belgian}~(\emph{Bottom}). The pipe dimensions, roughness coefficients, and a nominal demand vector $\bq_0$ were derived from~\cite{schmidt2017gaslib}. The goals for conducting additional tests on GasLib-40 include: \emph{i)} Evaluating our solvers on a realistic setup; \emph{ii)} Testing our MI-QCQP when Condition~\ref{con:C3} is violated; and \emph{iii)} Benchmarking the performance of our solvers against NR-based solver. We next briefly introduce the NR-based solver used for benchmarking. Given an injection $\bq$, compressor ratios $\alpha_\ell$'s, and reference pressure $\psi_{1}$, stack the unknowns as ${\by=[\phi_1,\dots,\phi_L,\psi_{2},\dots,\psi_N]^\top}$. Define the equality constraints \eqref{eq:mc2},~\eqref{seq:wey2a}, and \eqref{seq:compa} collectively as $\bg(\by)=0$. Given an initial estimate $\by_0$, the NR-based solver would iterate as \[\by_{t+1}=\by_{t}-\mu[\bJ(\by_{t})]^{-1}\bg(\by_{t})\]
where $t$ is the iteration count; matrix $\bJ(\by_{t})$ is the Jacobian of $\bg(\by)$ evaluated at $\by_{t}$; and $\mu$ a step size. A solution $\by^\star$ obtained on convergence of NR updates would be deemed feasible if the inequalities \eqref{seq:wey2b} and \eqref{seq:compb} are satisfied. Since, the NR updates target at attaining $\bg(\by)=\bzero$, the performance evaluation criteria for our results would be $\|\bg(\by)\|_2$ in lieu of the inexactness gap $G$.

In the first set of tests on GasLib-40, we generated $500$ gas injection instances $\bq$ by scaling the entries of $\bq_0$ independently, by random factors chosen uniformly on $[0.75,1.25]$. The pressure at node $1$ was set to $50$~bar and its injection was set to the negative sum of other nodes for all instances. Next, the compression ratios for the 6 compressors were drawn uniformly within $[1,2]$. All $500$ instances were solved using three approaches: \emph{a1)} the MI-QCQP and Algorithm~\ref{algo:1}; \emph{a2)} NR with flows initialized at $(\bA^\top)^\dagger\bq$, and all pressures initialized at $\psi_{1}$; and \emph{a3)} NR with flows and pressures initialized at the solution of MI-QCQP and Algorithm~\ref{algo:1}. The stopping criteria for NR was set to $\|\bg(\by)\|_2<10^{-3}$, subject to a maximum iteration count of $50$. The step size for both initialization scenarios was kept as $\mu=1$. The MI-QCQP deemed $5$ out of the $500$ instances as infeasible and the performance criteria $\|\bg(\by)\|_2$ was found to lie in $[0.005,0.183]$ with the median at $0.009$. To compare to the index of inexactness gap, the range for $G$ for the $495$ feasible cases was $[8\cdot10^{-5},~6\cdot10^{-2}]$. Thus, the MI-QCQP alongside Algorithm~\ref{algo:1} was successful in finding the GF solution for all $495$ instances. Interestingly, $474$ of the $495$ feasible GF instances exhibit circulations, and hence violate Condition~\ref{con:C3}. Thus, the numerical results empirically demonstrate that the developed MI-QCQP alongside Algorithm~\ref{algo:1} successfully solves the GF problem even when the conditions of Theorem~\ref{th:gf1exact} are violated. The NR solver, if initialized at the solution of MI-QCQP improves the solution accuracy, resulting in $\|\bg(\by^\star)\|_2$ within $1.2\cdot10^{-4}-0.13$. For the $5$ instances deemed infeasible by MI-QCQP, the NR solver was initialized at all zero flows and pressures; all $5$ instances failed to converge. Surprisingly, when the NR solver was initialized with $(\bA^\top)^\dagger\bq$ as flows and $\psi_{0}'s$ as pressures, all $500$ instances failed to converge. The non-convergence of the NR solver is however alleviated when $\mu$ was reduced as discussed next.

\begin{figure}[t]
	\centering
	\includegraphics[scale=0.33, angle=-90]{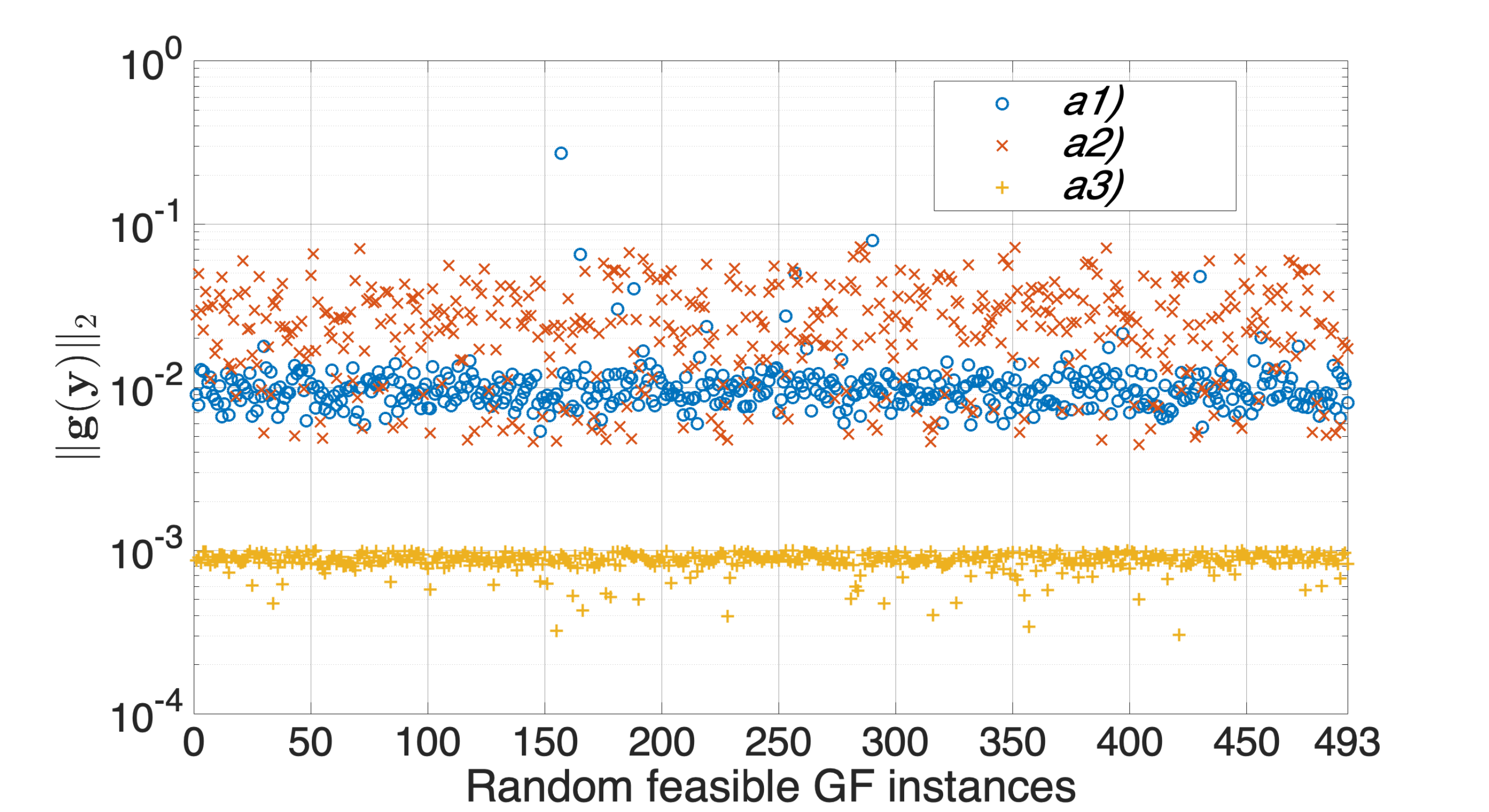}
	\caption{Accuracy measure $\|\bg(\by)\|_2$ for GF solutions obtained by MI-QCQP in \eqref{eq:G2} followed by Algorithm~\ref{algo:1}, and GF solutions found by the Newton Raphson iterations for different initializations.}
	\label{fig:nrcomp}
\end{figure}

A second set of tests were conducted on the GasLib-40 NGN with $500$ random injections and compressor ratios generated as described earlier. The MI-QCQP solver deemed $7$ of the $500$ instances as infeasible. All $500$ instances were then solved with the NR-based solver with flows initialized at $(\bA^\top)^\dagger\bq$ and pressures at $\psi_{1}$, and $\mu$ was set to $\mu=0.9$. A steep decline in $\|\bg(\by_{t})\|_2$ was observed in the first few (roughly 10) iterations, while the tolerance of $10^{-3}$ was not attained within the $50$ iterations limit. However, if the NR solver is initialized at the solution of MI-QCQP and Algorithm~\ref{algo:1}, the convergence criteria of $10^{-3}$ was attained at an average of $7.8$ iterations. The values of $\|\bg(\by)\|_2$ attained by three solution techniques \emph{a1)--a3)} are shown in Fig.~\ref{fig:nrcomp}. The results suggest that the accuracy of the MI-QCQP solver is better than that of \emph{a2)}, which is a prudent initialization. However, if the NR-based solver is warm-started with the solution of MI-QCQP, an order of magnitude improvement in accuracy is observed. On the computational front, the MI-QCQP solver alongside Algorithm~\ref{algo:1} is efficient with median solving time of $1.52$~sec. However, as anticipated, the NR solvers have superior performance with median solving time of $0.17$~sec. Finally, inspecting the $7$ instances deemed infeasible by the MI-QCQP solver, the solution obtained by \emph{a2)} indicates violation of \eqref{seq:compb}; demonstrating the merit of the proposed MI-QCQP towards certifying infeasibility of GF instances. 

%%%%%%%%%%%%%%%%%%%%%%%%%%%%%%%%%%%%%%%%%%%%%%%%%%%%
\section{Conclusions}\label{sec:conclusions}
Exploiting recent results from graph theory and convex relaxations, this work provides a fresh perspective on the steady-state GF problem. The uniqueness of the GF solution has been established in a generalized setting for arbitrary NGN topologies, multiplicative compressors and multiple fixed-pressure nodes. Granted that the GF solution is unique, constrained and unconstrained versions of convex energy function minimization-based GF solvers have been proposed. These solvers can efficiently solve any GF task instance with a single fixed-pressure node and networks with compressors not on cycles. To expand the scope, an MI-QCQP GF solver had been also proposed relying on a convex relaxation of the Weymouth equation. The relaxation has been shown to be exact under specific network conditions. Numerical tests reveal that the developed MI-QCQP solver succeeds in finding the unique GF solution even when the needed conditions are violated. The success of the MI-QCQP relaxation is attributed to a judiciously designed objective. The developed approach sets forth an analytical platform for ensuring exact relaxation. Evaluating the performance of the developed approach for various optimal gas flow tasks constitutes an interesting research direction.

%%%%%%%%%%%%%%%%%%%%%%%%%%%%%%%%%%%%%%%%%%%%%%%%%%%%
\appendix
\begin{proof}[Proof of Lemma~\ref{le:1path}]
For an edge $\ell_i\in\mcP_{mn}$, let us name the incident node closer to $m$ as $m_i$, and the other node as $m_{i+1}$, as shown in Fig.~\ref{fig:path}.

		\begin{figure}[h]
		\centering
		\includegraphics[scale=0.6]{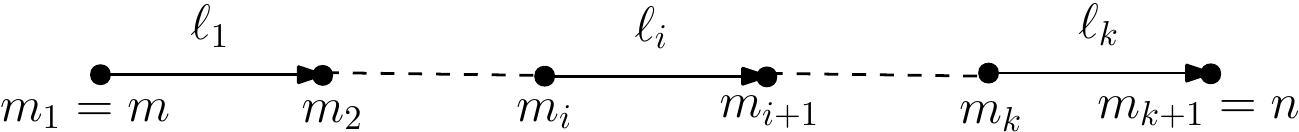}
		\caption{Nomenclature for nodes and edges along $\mcP_{mn}$.}
		\label{fig:path}
	\end{figure}

Let $\bpsi$ and $\bpsi'$ be the pressure vectors corresponding to $\bphi$ and $\bphi'$. Since pressures $\psi_m$ and $\psi_n$ are fixed, it follows $\psi_m=\psi_m'$ and $\psi_n=\psi_n'$. Proving by contradiction, suppose \eqref{eq:le2a} holds. If that is the case, first it will be shown that $\psi_{m_i}'-\psi_{m_{i+1}}'>\psi_{m_i}-\psi_{m_{i+1}}$ for every lossy pipe $\ell_i\in\mcP_{mn}$.
 
Suppose that $\sign(\bphi'-\bphi)\cdot\bpi^{mn}>\bzero$. Let us denote the RHS of \eqref{seq:wey2a} by $w(\phi_{\ell})$. It is evident that $w(\phi_{\ell})$ is monotonically increasing in $\phi_{\ell}$. Hence, for any lossy pipe $\ell_i\in\mcP_{mn}$, it holds
 \begin{align}\label{eq:train}
0&\stackrel{a}{<}\pi^{mn}_{\ell_i}\sign(\phi_{\ell_i}'-\phi_{\ell_i})\notag\\
 &\stackrel{b}{=}\pi^{mn}_{\ell_i}\sign(w(\phi_{\ell_i}')-w(\phi_{\ell_i}))\notag\\
 &\stackrel{c}{=}\sign( \pi^{mn}_{\ell_i})\sign(w(\phi_{\ell_i}')-w(\phi_{\ell_i}))\notag\\
 &\stackrel{d}{=}\sign(\pi^{mn}_{\ell_i}w(\phi_{\ell_i}')-\pi^{mn}_{\ell_i}w(\phi_{\ell_i}))\notag\\
 &\stackrel{e}{=}\sign((\psi_{m_{i}}'-\psi_{m_{i+1}}')-(\psi_{m_{i}}-\psi_{m_{i+1}})),
 \end{align}
 where $(a)$ holds by hypothesis; $(b)$ stems from the monotonicity of $w(\phi_{\ell})$; $(c)$ holds because $\pi^{mn}_{\ell_i}\in\{0,1,-1\}$; $(d)$ holds from the property of $\sign$ by definition; and $(e)$ from the definition of $\bpi^{mn}$ and \eqref{seq:wey2a}. The inequality \eqref{eq:train} implies 
 \begin{equation}\label{eq:lossypipe}
 \psi_{m_i}'-\psi_{m_{i+1}}'>\psi_{m_i}-\psi_{m_{i+1}}.
 \end{equation}
 
 Let us now apply \eqref{eq:lossypipe} and \eqref{seq:compa} for the edges $\ell_1$ to $\ell_k$ along $\mcP_{mn}$. For the fixed pressure node $m$, we have $\psi_m=\psi_m'$. If $\ell_1$ is a lossy pipe, we get $\psi_{m_2}'<\psi_{m_2}$ from \eqref{eq:lossypipe}; otherwise $\psi_{m_2}'=\psi_{m_2}$ from \eqref{seq:compa}. Similarly, we can show that $\psi_{m_3}'\leq\psi_{m_3}$, where the equality holds only if both $\ell_1$ and $\ell_2$ are compressors. However, this is practically impossible as every compressor is modeled as an ideal compressor followed by a lossy pipe, necessitating $\psi_{m_3}'<\psi_{m_3}$. Continuing the process for all edges along $\mcP_{mn}$ yields $\psi_n'<\psi_n$, which contradicts with node $n$ being a fixed-pressure node. Similarly, the assumption $\sign(\bphi'-\bphi)\cdot\bpi^{mn}<0$ leads to a contradiction by yielding $\psi_n'>\psi_n$.
\end{proof}

\begin{proof}[Proof of Lemma~\ref{le:networkflow}]
Given the two pairs $(\bq,\bphi)$ and $(\bq',\bphi')$ satisfying \eqref{eq:mc2} and $\bq\neq\bq'$, let us define $\tbphi:=\bphi'-\bphi$ and $\tbq:=\bq'-\bq$. By applying \eqref{eq:mc2} on $(\bq,\bphi)$ and $(\bq',\bphi')$, and taking the difference, we get
\begin{equation}\label{seq:delqa}
\bA^\top\tbphi=\tbq.
\end{equation}
Since $\bone\in\nullspace(\bA)$, premultiplying \eqref{seq:delqa} by $\bone^\top$ provides
\begin{equation}\label{seq:delqb}
\bone^\top\tbq=\bzero.
\end{equation}
From \eqref{seq:delqa}--\eqref{seq:delqb}, the pair $(\tbq,\tbphi)$ qualifies as a set of balanced gas injections. By definition of $(\tbq,\tbphi)$, proving \eqref{eq:netflow} is equivalent to showing there exists a path $\mcP_{mn}$ for which
\begin{subequations}\label{eq:netflow2}
\begin{align}
&\tbphi\odot\bpi^{mn}>\bzero\label{seq:netflowa2}\\
&\tilde{q}_m>0~~\text{and}~~ \tilde{q}_n<0.\label{seq:netflowb2}
\end{align}
\end{subequations}
To prove the existence of such a path, we use the ensuing result based on~\cite[Th.~8.8]{korte2012combinatorial}.

\begin{lemma}[\cite{korte2012combinatorial}]\label{le:st}
Given a graph with injection $q$ at node $s$, demand $q$ at node $t$, and zero injections at all other nodes, there exists an $s$-$t$ path with flow directions along the path from $s$ to $t$. 
\end{lemma}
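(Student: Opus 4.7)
The plan is to construct the desired $s$-$t$ path by a greedy walk through the support of $\bphi$, starting at $s$ and following edges in the direction of flow. At $s$, the net signed outflow equals $q>0$, so at least one edge incident to $s$ must carry flow leaving $s$ in the traversal sense---either an edge oriented away from $s$ with positive flow, or an edge oriented into $s$ with negative flow. I would traverse such an edge to reach some node $v_1$. At any subsequent intermediate node $v_i$ with zero injection, signed inflow equals signed outflow; the edge just traversed contributes positively to the inflow of $v_i$ in the traversal direction, so there must exist at least one incident edge along which flow departs $v_i$ in the traversal sense. This guarantees the walk can always be extended until it reaches a node with nonzero injection.

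To ensure the walk actually terminates at $t$ rather than entering a dead-end loop, I would argue by induction on the cardinality of the flow support $|\mathrm{supp}(\bphi)|$. In a finite graph the walk must eventually either reach $t$ or revisit a previously visited node. If it revisits a node before reaching $t$, the closed portion is a cycle $\mcC$ on which every chosen edge carries flow in a common traversal sense. Subtracting from $\bphi$ the signed indicator $\bn^\mcC$ scaled by the smallest $|\phi_\ell|$ on $\mcC$ cancels at least one edge, strictly decreases $|\mathrm{supp}(\bphi)|$, and leaves the injection vector unchanged, since cycle indicators lie in the null space of $\bA^\top$. I would then apply the induction hypothesis to the reduced flow, whose support still has $s$ as the unique source and $t$ as the unique sink. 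The base case arises when no cycle can occur, in which case the walk visits distinct nodes and the only node at which conservation permits termination is the one carrying strictly negative injection, namely $t$.

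The main obstacle I anticipate is the bookkeeping around the cycle cancellation: one must select the scalar and orientation of the correction so that (i) at least one edge flow is zeroed out, (ii) no other nodal injection is disturbed (which relies precisely on $\bA^\top \bn^\mcC = \bzero$), and (iii) the remaining flows still satisfy the orientation requirements needed for the induction. Once this bookkeeping is carefully set, the induction completes and produces a walk in which every edge carries flow in the traversal direction, yielding the claimed $s$-$t$ path. This mirrors the classical flow-decomposition argument that is the substance of Theorem~8.8 in Korte--Vygen cited by the paper.
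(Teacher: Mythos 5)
Your proof is correct, but note there is nothing in the paper to compare it against: the paper does not prove Lemma~\ref{le:st}, it simply imports it from \cite[Th.~8.8]{korte2012combinatorial} (the flow-decomposition theorem). What you have written is essentially the textbook proof of that cited result: a greedy walk along flow-carrying edges, which conservation prevents from getting stuck at any zero-injection node, combined with cancellation of any flow-carrying cycle the walk closes, and strong induction on $|\mathrm{supp}(\bphi)|$. Your bookkeeping concerns are resolved exactly as you anticipate: with $\bn^{\mcC}$ oriented along the traversal direction and $\delta=\min_{\ell\in\mcC}|\phi_\ell|$, the update $\bphi-\delta\,\bn^{\mcC}$ zeroes at least one edge, leaves the injections untouched because $\bA^\top\bn^{\mcC}=\bzero$, and cannot flip the sign of any surviving flow since $\delta\le|\phi_\ell|$; this last fact is what allows the path supplied by the induction hypothesis for the reduced flow to be reused verbatim as a path for the original flow, a transfer step worth stating explicitly. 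One structural simplification: no separate base case is needed, since for every support size the dichotomy (either the walk reaches $t$, or a cycle is cancelled and the hypothesis is invoked on a strictly smaller support) already closes the induction, the empty-support case being vacuous. Relative to the paper, your route buys a self-contained lemma at the cost of reproducing a known argument; the paper's citation buys brevity and defers correctness to the reference.
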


Lemma~\ref{le:st} considers a single-source single-destination network flow setup. We transform our problem to this setup through the next steps; see also Fig.~\ref{fig:augmentG}:
\renewcommand{\labelenumi}{\emph{\arabic{enumi})}}
\begin{enumerate}
\item The nodes of graph $\mcG$ are partitioned into the subset with positive $\mcN_+:\{n\in\mcN:\tilde{q}_n>0\}$; negative $\mcN_{-}:\{n\in\mcN:\tilde{q}_n<0\}$; and zero injections $\mcN_0:\{n\in\mcN:\tilde{q}_n=0\}$. Because $\tbq\neq\bzero$, the sets $\mcN_+$ and $\mcN_{-}$ are non-empty.
\item Augment $\mcG$ by adding nodes $s$ and $t$.
\item All nodes in $\mcN_+$ are connected to node $s$, and all nodes in $\mcN_{-}$ are connected to node $t$.
\item The injections in $\mcN_+$ are lumped in node $s$ by setting the flows $\tilde{\phi}_{sn}=\tilde{q}_n$ for all $n\in\mcN_+$. Similarly, the demands in $\mcN_{-}$ are lumped in node $t$ by setting the flows $\tilde{\phi}_{nt}=-\tilde{q}_n$ for all $n\in\mcN_{-}$. 
\end{enumerate}

Applying Lemma~\ref{le:st} on this augmented graph, there exists a path $\mcP_{st}$ with flow directions from $s$ to $t$. For any such path $\mcP_{st}$, eliminate the first and last edges to get a path $\mcP_{mn}$ with $m\in\mcN_+$ and $n\in\mcN_{-}$. Claim~\eqref{seq:netflowb2} follows by construction. We next show \eqref{seq:netflowa2}: For each edge $\ell\in\mcP_{mn}$, it was shown that the direction of $\tilde{\phi}_{\ell}$ is along the path $\mcP_{mn}$. If $\pi_{\ell}^{mn}=+1$, the direction of edge $\ell$ agrees with the direction of $\mcP_{mn}$. Since $\tilde{\phi}_\ell$ is along $\mcP_{mn}$, then $\tilde{\phi}_{\ell}>0$. If $\pi_{\ell}^{mn}=-1$, the direction of edge $\ell$ is opposite to the direction of $\mcP_{mn}$. Since $\tilde{\phi}_\ell$ is along $\mcP_{mn}$, then $\tilde{\phi}_{\ell}<0$. Either way, it holds that $\tilde{\phi}_{\ell}\pi_{\ell}^{mn}>0$  for all $\ell\in\mcP_{mn}$, which proves \eqref{seq:netflowa}.
\end{proof}

\begin{figure}[t]
	\centering
	\includegraphics[scale=0.7]{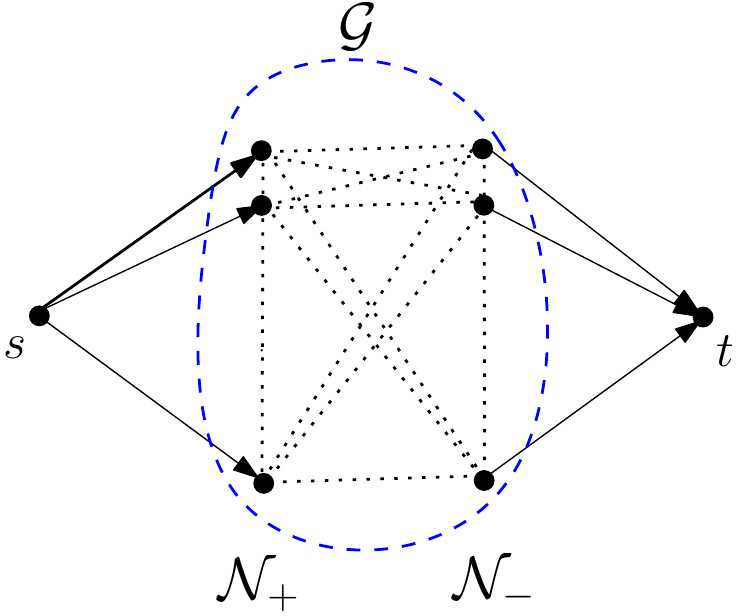}
	\caption{Augmented NGN graph.}
	\label{fig:augmentG}
\end{figure}

\begin{proof}[Proof of Theorem~\ref{th:gf1exact}]
Before proving the main result, we will need two preliminary results.

\begin{lemma}\label{le:ob1}
	For a lossy pipe $\ell=(m,n)$ not on an active cycle, if the triplet $(\psi_m,\psi_n,\phi_\ell)$ satisfies \eqref{eq:weyMC}, then the triplet $(\psi_m+\delta,\psi_n+\delta,\phi_\ell)$ also satisfies \eqref{eq:weyMC} for any finite $\delta$.
\end{lemma}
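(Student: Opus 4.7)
The plan is to verify, constraint by constraint, that \eqref{eq:weyMC} is preserved under the common shift $\psi_m\mapsto\psi_m+\delta$, $\psi_n\mapsto\psi_n+\delta$, with the understanding that the binary indicator $x_\ell$ stays fixed while the auxiliary variables $z_{\ell m}, z_{\ell n}$ are updated consistently with the exact McCormick relation. The reason this should work is that, after substituting the $z$ variables, the relaxed Weymouth inequality \eqref{seq:weyMCa} turns out to depend on the pressures only through the difference $\psi_m-\psi_n$, which is invariant under a common shift.

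First I would dispense with the box constraint \eqref{seq:weyMCb}: it involves only $\phi_\ell$, $x_\ell$, and the bound $\overline{\phi}_\ell$, none of which is touched by the shift, so it is trivially preserved. For \eqref{seq:weyMCa}, the key observation is that since $x_\ell\in\{0,1\}$, the McCormick inequalities \eqref{eq:MC} are exact and force $z_{\ell m} = x_\ell\psi_m$ and $z_{\ell n} = x_\ell\psi_n$. Plugging these into the left-hand side of \eqref{seq:weyMCa} gives
\[
2z_{\ell m} - 2z_{\ell n} + \psi_n - \psi_m \;=\; (2x_\ell-1)(\psi_m - \psi_n),
\]
so the constraint reduces to $(2x_\ell-1)(\psi_m-\psi_n)\geq a_\ell\phi_\ell^2$. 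After the shift, the auxiliary variables update to $z_{\ell m}' = x_\ell(\psi_m+\delta)$ and $z_{\ell n}' = x_\ell(\psi_n+\delta)$, and the left-hand side evaluates once again to $(2x_\ell-1)(\psi_m-\psi_n)$, i.e., exactly its pre-shift value. Since $\phi_\ell$ is unchanged, the right-hand side $a_\ell\phi_\ell^2$ is unchanged as well, and the inequality continues to hold.

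There is essentially no substantive obstacle: the lemma is a direct algebraic observation about the structure of \eqref{seq:weyMCa}, and the verification is a few lines. The only subtlety is bookkeeping of $z_{\ell m}$ and $z_{\ell n}$ under the shift; invoking exactness of the McCormick linearization for binary $x_\ell$ makes this transparent. Notably, the hypothesis that $\ell$ does not lie on an active cycle plays no role in the edge-local verification itself; it is a contextual assumption inherited from the broader argument of Theorem~\ref{th:gf1exact}, where the lemma is invoked to re-scale pressures on subtrees hanging off active cycles without disturbing the relaxed Weymouth relations of pipes outside those cycles.
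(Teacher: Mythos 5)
Your proof is correct and follows exactly the paper's reasoning: the paper disposes of Lemma~\ref{le:ob1} with the single observation that \eqref{eq:weyMC} involves pressure differences rather than pressures, which is precisely what your substitution $z_{\ell m}=x_\ell\psi_m$, $z_{\ell n}=x_\ell\psi_n$ makes explicit by reducing \eqref{seq:weyMCa} to $(2x_\ell-1)(\psi_m-\psi_n)\geq a_\ell\phi_\ell^2$. Your write-up simply fills in the bookkeeping the paper leaves implicit.
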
	

Lemma~\ref{le:ob1} follows directly from the fact that \eqref{eq:weyMC} involves pressure differences rather than pressures.

\begin{lemma}\label{le:ob2}
Consider an active cycle $\mcC_0$ and index its nodes as $\{0,\dots,k\}$. Given a fixed pressure $\psi_0$ and flows $\{\phi_{\ell}\}_{\ell\in\mcC_0}$ satisfying Condition~\ref{con:C3} and \eqref{seq:compb}, there exists a set of pressures $\{\psi_i\}_{i=1}^k$ satisfying \eqref{eq:weyMC} and \eqref{seq:compa}.
\end{lemma}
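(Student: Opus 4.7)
The plan is to construct the pressures by walking around the cycle from node $0$ back to itself, using the one-sided slack available in the relaxed Weymouth inequality \eqref{seq:weyMCa} on pipes to close the loop against the rigid multiplicative law \eqref{seq:compa} imposed by compressors.

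First, I would orient the cycle along $\bn^{\mcC_0}$ and traverse its edges in sequence starting from $\psi_0$. On each compressor I apply \eqref{seq:compa} as a hard equality (using its reciprocal when the traversal direction opposes the compressor edge). On each pipe $\ell$ I require the pressure drop in the flow direction to equal $a_\ell\phi_\ell^2+\epsilon_\ell$ for some slack $\epsilon_\ell\ge 0$; this is the tightest form of \eqref{seq:weyMCa}. Accumulating the local updates around the loop, the returned value $\hat\psi_0(\bepsilon)$ becomes an affine function of $\bepsilon$, whose coefficient on $\epsilon_\ell$ has sign $-s_\ell$ with $s_\ell:=n^{\mcC_0}_\ell\sign(\phi_\ell)$, and magnitude equal to a strictly positive product of the compression factors encountered downstream of $\ell$ along the traversal.

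Second, I would close the loop by choosing $\bepsilon\ge\bzero$ with $\hat\psi_0(\bepsilon)=\psi_0$. Condition~\ref{con:C3} asserts that $\bn^{\mcC_0}\odot\bphi_{\mcC_0}$ is neither entrywise positive nor entrywise negative; combined with \eqref{seq:compb}, which pins the flow direction of every compressor in $\mcC_0$, this implies the existence of a pipe in $\mcC_0$ whose slack coefficient has the sign needed to absorb the mismatch $\psi_0-\hat\psi_0(\bzero)$. The monotonicity and continuity of $\hat\psi_0$ in that slack then yield, via an intermediate value argument analogous to the bisection in Algorithm~\ref{algo:1}, a unique non-negative $\epsilon_\ell$ that exactly closes the loop. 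Setting the remaining slacks to zero and reading off the intermediate pressures along the traversal produces the required $\{\psi_i\}_{i=1}^k$ satisfying both \eqref{eq:weyMC} and \eqref{seq:compa}.

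The main obstacle is justifying that a pipe with the correct sign of $s_\ell$ is always available, since compressors admit no slack. This calls for a careful case analysis pairing Condition~\ref{con:C3} against the rigid orientation of compressor flows: if every pipe of $\mcC_0$ happened to share the same sign of $s_\ell$, then the compressor contributions to the product of scalings around the loop would have to realize the mismatch on their own, and one must argue this is incompatible with the no-circulation constraint. A secondary issue is the non-negativity of the intermediate pressures along the walk; since the required slack is bounded by the finite gap $|\psi_0-\hat\psi_0(\bzero)|$ divided by a strictly positive product of compression factors, and since several pipes with the correct sign may be available, this can be enforced by distributing the slack appropriately among them while staying in the admissible physical range for $\psi_0$.
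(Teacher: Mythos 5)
Your overall framework is sound and, in fact, close in spirit to the paper's: both arguments apply the exact pressure laws around the cycle and concentrate the one-sided slack of \eqref{seq:weyMCa} at a location whose orientation lets it absorb the loop mismatch. (The paper does this by locating a node where the flows on the two incident cycle arcs converge or diverge, defining pressures exactly along the two arcs from node $0$ to that node, and closing the loop with a $\min$/$\max$ there; you do it by an affine slack parametrization and an intermediate-value argument.) The affine dependence of $\hat\psi_0$ on $\bepsilon$, the signs of its coefficients, and the monotonicity step are all correct.

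The genuine gap is the step you yourself flag as the main obstacle: the existence of a lossy pipe whose slack coefficient has the sign needed to absorb the mismatch. Your proposed resolution --- a case analysis deriving a contradiction from Condition~\ref{con:C3} and \eqref{seq:compb} alone --- cannot be completed, because the claim is false at that level of generality. Consider a two-node cycle consisting of a lossy pipe $(0,1)$ and a single ideal compressor $(0,1)$ in parallel, both carrying positive flow into node $1$ (this is exactly the parallel-compressor cycle of the Belgian network). Condition~\ref{con:C3} holds (the two flows oppose each other around the cycle) and \eqref{seq:compb} holds, yet feasibility would require $\alpha\psi_0=\psi_1\leq\psi_0-a\phi^2$, which is impossible for $\alpha\geq 1$: the only pipe's slack can only lower the returned pressure, while the rigid compressor forces it up. What rescues the lemma is the paper's modeling convention that every compressor is an ideal compressor followed \emph{in series} by a lossy pipe carrying the identical flow. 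With that fact, every compressor's orientation sign around the cycle is duplicated by a pipe, so Condition~\ref{con:C3} guarantees pipes of both orientations (or a zero-flow pipe, which can take slack of either sign), and your intermediate-value argument then goes through. This convention is not cosmetic: the paper invokes it explicitly at precisely this step (to establish that a node $k$ exists realizing one of its four convergence/divergence scenarios), and any correct proof of Lemma~\ref{le:ob2} must use it; your proposal never does.
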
	

\begin{proof}
From Condition~\ref{con:C3} and the fact that a compressor is modeled as an ideal compressor followed by a lossy pipe, it is not hard to see that there must exist a node $k\in \mcC_0$ that leads to one of the four flow scenarios shown in Fig.~\ref{fig:scenario}. 

\begin{figure}[t]
	\centering
	\includegraphics[scale=0.65]{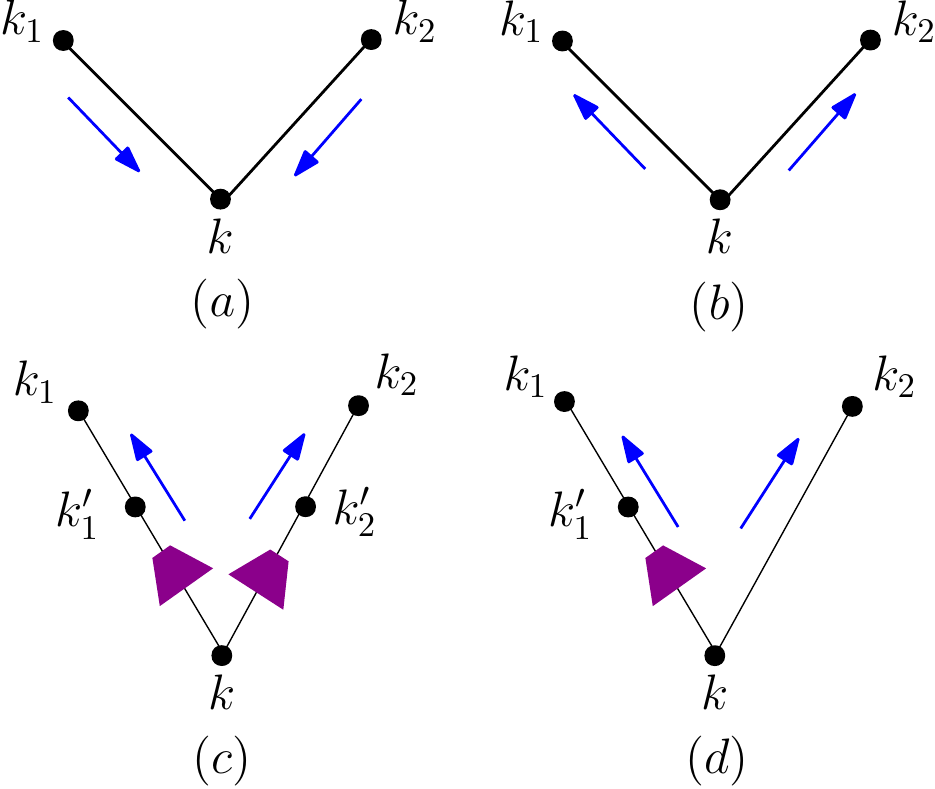}
	\caption{Four possible scenarios for a cycle with non-circulating gas flow. The arrows represent the actual gas flow directions.}
	\label{fig:scenario}
\end{figure}

Proving by construction, we will next define pressures $\{\psi_i\}_{i=1}^k$ such that \eqref{eq:weyMC} and \eqref{seq:compa} are satisfied for all edges in $\mcC_0$. Traversing the paths $0\rightarrow k_1$ and $0\rightarrow k_2$, one can recursively define pressures for all nodes using $\psi_0$ and flows $\{\phi_{\ell}\}_{\ell\in\mcC_0}$ based on the exact Weymouth equation \eqref{eq:wey2} and \eqref{seq:compa}. The pressures on the remaining nodes of $\mcC_0$ can be defined for the four scenarios of Fig.~\ref{fig:scenario} as follows:
\begin{align*}
(a)&~\psi_k:=\min\{\psi_{k_1}-a_{k_1k}\phi_{k_1k}^2,\psi_{k_2}-a_{k_2k}\phi_{k_2k}^2\}\\
(b)&~\psi_k:=\max\{\psi_{k_1}+a_{kk_1}\phi_{kk_1}^2,\psi_{k_2}+a_{kk_2}\phi_{kk_2}^2\}\\
(c)&~\psi_k:=\max\left\{\frac{\psi_{k_1}+a_{k_1'k_1}\phi_{k_1'k_1}^2}{\alpha_{kk_1'}},\frac{\psi_{k_2}+a_{k_2'k_2}\phi_{k_2'k_2}^2}{\alpha_{kk_2'}}\right\}\\
&~ \psi_{k_1'}:=\alpha_{kk_1'}\psi_k,\quad \psi_{k_2'}:=\alpha_{kk_2'}\psi_k\\
(d)&~\psi_k:=\max\left\{\frac{\psi_{k_1}+a_{k_1'k_1}\phi_{k_1'k_1}^2}{\alpha_{kk_1'}},\psi_{k_2}+a_{kk_2}\phi_{kk_2}^2\right\}\\
&~\psi_{k_1'}:=\alpha_{kk_1'}\psi_k.
\end{align*}
To see that the constructed pressures satisfy \eqref{eq:weyMC}, take for example scenario $(a)$. Applying \eqref{eq:weyMC} along the edges $(k_1,k)$ and $(k_2,k)$ yield that $\psi_k$ should satisfy $\psi_k\leq \psi_{k_1}-a_{k_1k}\phi_{k_1k}^2$ and $\psi_k\leq \psi_{k_2}-a_{k_2k}\phi_{k_2k}^2$. This is indeed the case by selecting $\psi_k$ as the minimum of the two RHS. Similar reasoning applies to the other scenarios.
\end{proof}

Proceeding with the proof of Theorem~\ref{th:gf1exact}, let $(\bphi,\bpsi)$ be the unique solution to \eqref{eq:G1}, and $(\bphi',\bpsi')$ a minimizer of \eqref{eq:G2}. Proving by contradiction, assume that there exists an edge $\ell$ not belonging to an active cycle, such that $\phi_\ell'\neq\phi_\ell$. Recall that the set of all active cycles is denoted by $\mcS_{\mcC}^a$. Since both flow vectors satisfy \eqref{eq:mc2}, their difference $\bn:=\bphi-\bphi'$ must lie in the nullspace of $\bA^\top$. The nullspace of $\bA^\top$ is spanned by the indicator vectors for all fundamental cycles in the gas network graph~\cite[Corollary~14.2.3]{GodsilRoyle}. Therefore, the entries of $\bn$ related to edges not on a cycle must be zero. Since by hypothesis $\ell\notin \mcS_\mcC^a$, edge $\ell$ should belong to one of the cycles in $\mcS_\mcC\setminus\mcS_\mcC^a$. This non-active cycle will be henceforth termed $\mcC$. 

The rest of the proof is organized in three parts: Part I constructs a flow vector $\hbphi$ that satisfies \eqref{eq:mc2} and \eqref{seq:compb}. Part II shows there exists a $\hat{\bpsi}$ so that the pair $(\hbphi,\hat{\bpsi})$ is feasible for \eqref{eq:G2}. Part III shows that $(\hbphi,\hat{\bpsi})$ attains a smaller objective for \eqref{eq:G2}, thus contradicting the optimality of $(\bphi',\bpsi')$.

\emph{Part I}: Define the flow vector $\hbphi$ as
\begin{equation}\label{eq:hatphi}
\hat{\phi}_\ell:=
\begin{cases}
\phi_\ell, &\ell\in \mcC\\
\phi_\ell, &\ell ~\text{belongs to any active cycle}\\
\phi_\ell', &\text{otherwise}
\end{cases}.
\end{equation}
By construction, vector $\hbphi$ satisfies
\begin{equation}\label{eq:phidiff}
\bphi'-\hbphi=\lambda\bn^\mcC+\bn^a
\end{equation}
where $\bn^\mcC$ is the indicator vector for cycle $\mcC$; the constant $\lambda$ is nonzero; and vector $\bn^a\in\nullspace(\bA^\top)$ can have nonzero entries only for edges in active cycles. Since $\bphi'$ satisfies constraint \eqref{eq:mc2} and $\bA^\top\bn^\mcC=\bA^\top\bn^a=\bzero$, then $\bA^\top\hbphi=\bA^\top\bphi'=\bq$. This proves that $\hbphi$ satisfies \eqref{eq:mc2}. Note that $\hbphi$ is constructed by selecting entries from $\bphi$ and $\bphi'$. Granted both $\bphi$ and $\bphi'$ satisfy \eqref{seq:compb}, vector $\hbphi$  trivially satisfies \eqref{seq:compb} too.

\emph{Part II}: We will delineate the steps for constructing a vector of pressures $\hbpsi$ such that $(\hbphi,\hat{\bpsi})$ is feasible for \eqref{eq:G2}. Let us select a spanning tree $\mcT$ of the NGN graph $\mcG$ rooted at the reference $r$. We shall define the pressures $\hat{\psi}_n$'s while traversing $\mcT$ via depth-first search. In such a traversal, the following three cases may be identified on arriving at any node $n$: 

\emph{Case 1:} Node $n$ is neither in $\mcC$ nor on an active cycle. Let $n-1$ be the parent node of $n$ in $\mcT$ and define
$$\htpsi_{n}:=\begin{cases}
\alpha_{n-1,n} \htpsi_{n-1} &,~\text{if}~ (n-1,n)\in\mcP_a\\
\htpsi_{n-1} +(\psi_{n}'-\psi_{n-1}') &,~\text{if}~(n-1,n)\in\bmcP_{a}
\end{cases}.$$
 Since the edge $(n-1,n)$ is not in $\mcC\cup\mcS_{\mcC}^a$, we have $\hat{\phi}_{n-1,n}={\phi}_{n-1,n}'$ from \eqref{eq:hatphi}. Therefore, if $(n-1,n)$ is a lossy pipe, Lemma~\ref{le:ob1} ensures that the defined pressure $\htpsi_{n}$ satisfies \eqref{eq:weyMC}. Moreover, if $(n-1,n)$ is a compressor, constraint \eqref{seq:compa} is satisfied trivially by definition.

\emph{Case 2:} Node $n$ is in $\mcC$. If $n$ is the first node in $\mcC$ to be visited, define $\htpsi_{n}$ as in \emph{Case 1}. Then, define the pressures for the remaining nodes $i\in\mcC$ as $\hat{\psi}_{i}:=\psi_{i}+(\htpsi_{n}-\psi_{n})$. Note from \eqref{eq:hatphi} that the flows along $\mcC$ are assigned from $\bphi$, the pair $(\bphi,\bpsi)$ satisfies \eqref{eq:wey2} and hence the relaxed Weymouth \eqref{eq:weyMC} as well. The constructed pressures $\htpsi_{i}$'s for $i\in\mcC$ are simply a shifted version of the pressures $\psi_{i}$'s. Therefore, the pressures $\htpsi_{i}$'s satisfy \eqref{eq:weyMC} from Lemma~\ref{le:ob1}. Mark all nodes in $\mcC$ as traversed and continue.

\emph{Case 3:} Node $n$ is in an active cycle $\mcC_a$. If $n$ is the first node in $\mcC_a$ to be traversed, define the $\htpsi_{n}$ as in \emph{Case 1}. Then, define the pressure for the remaining nodes $i\in\mcC_a$ using Lemma~\ref{le:ob2}. Mark all nodes in $\mcC_a$ as traversed and continue.

Since the constructed pressures satisfy \eqref{eq:weyMC} and \eqref{seq:compa}, the pair $(\hbphi,\hat{\bpsi})$ is feasible for \eqref{eq:G2}. Observe that the pressure drop across lossy pipes not in $\mcC$ is ${\htpsi_m-\htpsi_n=\psi_m'-\psi_n'}$ for \emph{Case 1}; and ${\htpsi_m-\htpsi_n=\psi_m-\psi_n}$ for lossy pipes in $\mcC$ under \emph{Case 2}. This fact is imperative for the ensuing Part III.

\emph{Part III}: We will next show that $r(\bpsi')>r(\hat{\bpsi})$ to contradict the optimality of $\bpsi'$. Note that the objective $r(\bpsi)$ in \eqref{eq:G2} sums up the absolute pressure differences along lossy pipes, but not on active cycles. Since by construction these differences have changed only along $\mcC$, we get 
\begin{equation}\label{eq:costdif}
r(\bpsi')-r(\hat{\bpsi})=\sum_{(m,n)\in \mcC}|\psi_m'-\psi_n'|-|\htpsi_m-\htpsi_n|.
\end{equation}
As the pressure differences depend on flows, we next compare the entries of $\hbphi$ and $\bphi'$ along $\mcC$ using \eqref{eq:phidiff}. Since the edge directions are assigned arbitrarily, assume wlog that $\htphi_{mn}\geq0$ for all $(m,n)\in\mcC$. Given $\bn_\mcC$ and \eqref{eq:phidiff}, one can find the value of $\lambda$. If $\lambda<0$, reverse the reference direction for cycle $\mcC$ to get a positive $\lambda$. Because of this, we can assume $\lambda>0$.

Recall that $\bn_\mcC\in\{0,\pm 1\}^P$. Partition the set of edges in $\mcC$ into mutually exclusive sets $\hat{\mcP}_+$ and $\hat{\mcP}_-$ based on positive and negative entries of $\bn_\mcC$, respectively. From \eqref{eq:phidiff}, it follows
\begin{align}\label{eq:phihatprime}
0\leq&\htphi_\ell<\phi_{\ell}',\quad\forall\ell\in\hat{\mcP}_+.
%&\phi_{\ell}'<\htphi_\ell,\quad\forall\ell\in\hat{\mcP}_-.
\end{align}
Summing up the pressure drops along $\mcC$ for $\hat{\bpsi}$ should be zero. Since the pressure drops along $\mcC$ are positive for the edges in $\hat{\mcP}_+$, and negative along the edges in $\hat{\mcP}_-$, it holds that 
\begin{align}\label{eq:sumhat}
&\sum_{(m,n)\in\hat{\mcP}_+}(\hat{\psi}_m-\hat{\psi}_n)=\sum_{(m,n)\in\hat{\mcP}_-}(\hat{\psi}_m-\hat{\psi}_n)\nonumber\\
\implies~&\sum_{(m,n)\in \mcC}|\hat{\psi}_m-\hat{\psi}_n|=2\sum_{(m,n)\in\hat{\mcP}_+}(\hat{\psi}_m-\hat{\psi}_n)
\end{align}
where the absolute value is trivial since $\hat{\phi}_{mn}\geq0$ for all $(m,n)\in \mcC$.

Drawing similar relations on $\bpsi'$, define the set $\mcP_+'\subset \mcC$ containing any edge $(m,n)\in \mcC$ such that the flow $\phi_{mn}'$ is along the direction of $\bn_\mcC$. Using the same argument as in \eqref{eq:sumhat} for $\bpsi'$, we obtain
\begin{align}\label{eq:sumtilde}
\sum_{(m,n)\in \mcC}|\psi_m'-\psi_n'|=2\sum_{(m,n)\in\mcP_+'}(\psi_m'-\psi_n').
\end{align}
Because the flows in $\hbphi$ for the edges in $\hat{\mcP}_+$ are aligned with $\bn_\mcC$  and $\phi_\ell'>\hat{\phi}_\ell$ for these edges from \eqref{eq:phihatprime}, it follows that $\hat{\mcP}_+\subseteq\mcP_+'$. Using the latter in \eqref{eq:sumtilde}, we get
\begin{align}\label{eq:P+less}
2\sum_{(m,n)\in\hat{\mcP}_+}(\psi_m'-\psi_n')
&\leq 2\sum_{(m,n)\in\mcP_+'}(\psi_m'-\psi_n')\nonumber\\
&=\sum_{(m,n)\in \mcC}|\psi_m'-\psi_n'|.
\end{align}

% Using  for all $\ell\in\mcP^+$ provides
%Recall that the pressure drop across a pipe $\ell$ is increases with flow $\phi_\ell$; see \eqref{eq:wey2}. 
For every edge $\ell=(m,n)\in\hat{\mcP}_+$, it holds that
\begin{equation}\label{eq:P+less2}
\hat{\psi}_m-\hat{\psi}_n\stackrel{(a)}{=}a_\ell\hat{\phi}_\ell^2\stackrel{(b)}{<} a_\ell\phi_\ell^{'2}\stackrel{(c)}{\leq} \psi_m'-\psi_n'
\end{equation}
%\begin{align}\label{eq:P+less2}
%\hat{\psi}_m-\hat{\psi}_n&\stackrel{(a)}{=}a_\ell\hat{\phi}_\ell^2\nonumber\\
%&\stackrel{(b)}{<} a_\ell\phi_\ell^{'2}\nonumber\\
%&\stackrel{(c)}{\leq} \psi_m'-\psi_n'
%%&\leq\sum_{(m,n)\in c}|\tilde{\psi}_m-\tilde{\psi}_n|,
%\end{align}
where $(a)$ comes from the definition of pressures in \emph{Case 2} of Part II; $(b)$ descends from $\phi_\ell'>\hat{\phi}_\ell>0$; and $(c)$ from \eqref{eq:weyMC}. Summing \eqref{eq:P+less2} over all $\ell\in\hat{\mcP}_+$ and multiplying by 2 gives
\begin{align*}
2\sum_{(m,n)\in\hat{\mcP}_+}(\hat{\psi}_m-\hat{\psi}_n)&<2\sum_{(m,n)\in\hat{\mcP}_+}(\psi_m'-\psi_n')\\
\implies~\sum_{(m,n)\in \mcC}|\hat{\psi}_m-\hat{\psi}_n|&<\sum_{(m,n)\in\mcC}|\psi_m'-\psi_n'|
\end{align*}
where the inequality stems from \eqref{eq:sumtilde} and \eqref{eq:P+less}. From \eqref{eq:costdif}, the latter implies that $r(\bpsi')>r(\hat{\bpsi})$, hence contradicting the optimality of $\bpsi'$.
\end{proof}

%%%%%%%%%%%%%%%%%%%%%%%%%%%%%%%%%%%%%%%%%%%%%%%%%%%%
\balance
\bibliography{myabrv,water,gas}
\bibliographystyle{IEEEtran}

\begin{IEEEbiography}[{\includegraphics[width=1in,height=1.25in,clip,keepaspectratio]{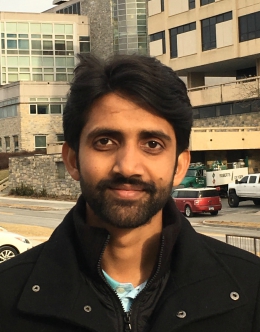}}] {Manish K. Singh} received the B.Tech. degree from the Indian Institute of Technology (BHU), Varanasi, India, in 2013;  and the M.S. degree from Virginia Tech, Blacksburg, VA, USA, in 2018; both in electrical engineering. During 2013-2016, he worked as an Engineer in the Smart Grid Dept. of POWERGRID, the central transmission utility of India. He is currently pursuing a Ph.D. degree at Virginia Tech. His research interests are focused on the application of optimization, control, and graph-theoretic techniques to develop algorithmic solutions for operation and analysis of water, natural gas, and electric power systems.
\end{IEEEbiography} 

\begin{IEEEbiography}[{\includegraphics[width=1in,height=1.25in,clip,keepaspectratio]{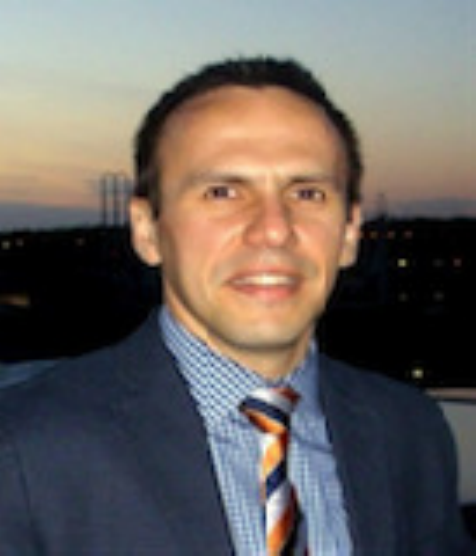}}] {Vassilis Kekatos} (SM'16) is an Assistant Professor with the Bradley Dept. of ECE at Virginia Tech. He obtained his Diploma, M.Sc., and Ph.D. from the Univ. of Patras, Greece, in 2001, 2003, and 2007, respectively. He is a recipient of the NSF Career Award in 2018 and the Marie Curie Fellowship. He has been a research associate with the ECE Dept. at the Univ. of Minnesota, where he received the postdoctoral career development award (honorable mention). During 2014, he stayed with the Univ. of Texas at Austin and the Ohio State Univ. as a visiting researcher. His research focus is on optimization and learning for future energy systems. He is currently serving in the editorial board of the IEEE Trans. on Smart Grid.
\end{IEEEbiography}

\end{document}